\tikzset{every picture/.style={line width=0.8pt}}
\definecolor{green}{rgb}{0,0.8,0}
\definecolor{cMaroon}{HTML}{93152a}
\newcommand{\defn}[1]{\textcolor{cMaroon}{\emph{#1}}}
\newtheorem{theorem}{Theorem}
\newtheorem{LE}[theorem]{Lemma}
\newtheorem{CN}[theorem]{Conjecture}
\newtheorem*{DEF}{Definition}
\newcounter{claim_nb}[theorem]
\newtheorem{claim}[claim_nb]{Claim}
\newtheorem*{claim*}{Claim}
\DeclareMathOperator{\dist}{dist}
\newcommand{\ignore}[1]{}
 \newcommand{\footremember}[2]{%
    \footnote{#2}
    \newcounter{#1}
    \setcounter{#1}{\value{footnote}}%
}
\newcommand{\footrecall}[1]{%
    \footnotemark[\value{#1}]%
}
\date{}
\begin{document}
\title{A Menger-type theorem for two induced paths}
\author{Sandra Albrechtsen \footremember{hamburg}{Universität Hamburg (Germany), \textit{$\{$sandra.albrechtsen, raphael.jacobs, paul.knappe$\}$@uni-hamburg.de}} \and Tony Huynh \footremember{sapienza} {Sapienza Università di Roma (Italy), \textit{$\{$huynh, wollan$\}$@di.uniroma1.it}} \and Raphael W. Jacobs \footrecall{hamburg} \and Paul Knappe \footrecall{hamburg} \and Paul Wollan \footrecall{sapienza}}

\maketitle

\begin{abstract}
We give an approximate Menger-type theorem for when a graph $G$ contains two $X-Y$ paths $P_1$ and $P_2$ such that $P_1 \cup P_2$ is an induced subgraph of $G$.  More generally, we prove that there exists a function $f(d) \in O(d)$, such that for every graph $G$ and $X,Y \subseteq V(G)$, either there exist two $X-Y$ paths $P_1$ and $P_2$ such that $\dist_G(P_1,P_2) \ge d$, or there exists $v \in V(G)$ such that the ball of radius $f(d)$ centered at $v$ intersects every $X-Y$ path.
\end{abstract}
%
\section{Introduction}

All graphs in this paper are finite and simple.

Let $G$ be a graph, and $X, Y \subseteq V(G)$.  An \defn{$X-Y$ path} is a path with one end in $X$, one end in $Y$, and no internal vertex in $X \cup Y$.  Note that by the definition, a single vertex of $X \cap Y$ is an $X-Y$ path.  If either~$X$ or $Y$ is empty, we will use the convention that no $X-Y$ path exists.  If $v$ is a single vertex in $G$, we will refer to a $v-X$ path as shorthand for a $\{v\} - X$ path.  Let $r \ge 0$ be an integer, and let $u,v \in V(G)$. The \defn{distance between $u$ and $v$}, denoted $\dist_G(u,v)$, is the length of a shortest $u-v$ path in $G$.  The \defn{ball of radius $r$ around $v$} is $B_G(v,r) := \{u \in V(G): \dist_G(u,v) \le r\}$.  More generally, if $H$ is a subgraph of $G$, we define $B_G(H,r)=\bigcup_{v \in V(H)} B_G(v, r)$.  For two subgraphs $H_1, H_2$ of $G$, the \defn{distance between $H_1$ and $H_2$}, denoted $\dist_G(H_1, H_2)$, is the minimum length of a $V(H_1)-V(H_2)$ path.

Given a graph $G$ and $X, Y \subseteq V(G)$, Menger's classic result says that the maximum number of pairwise vertex disjoint $X-Y$ paths is equal to the minimum size of a set $Z$ of vertices intersecting every $X-Y$ path~\cite{Menger1927}.  We consider the problem of finding many distinct paths between $X$ and $Y$ requiring not just that the paths be pairwise disjoint, but that the paths be pairwise far apart in $G$.  Let $H_1$ and $H_2$ be subgraphs of a graph $G$.  We say $H_1$ and $H_2$ are \defn{anti-complete} if there does not exist an edge of $G$ with one end in~$V(H_1)$ and one end in $V(H_2)$.  No analog of Menger's theorem is known for pairwise anti-complete paths linking two sets of vertices.  Such an exact characterization of when such paths exist is unlikely given that it is NP-complete to decide whether there exist two disjoint anti-complete $X-Y$ paths \cite{bienstock91,bienstock92}.  Note that a graph $G$ contains two disjoint anti-complete $X-Y$ paths if an only if it has two disjoint $X-Y$ paths which are an induced subgraph (take two disjoint anti-complete $X-Y$ paths $P_1$ and $P_2$ with~$|V(P_1)| \cup |V(P_2)|$ minimal).

We will show the following weak characterization of the induced two paths problem.

\begin{theorem}\label{thm:main}
There exists a constant $c$ such that for all graphs $G$, and all $X, Y \subseteq V(G)$, either there exist two disjoint $X-Y$ paths $P_1$, $P_2$ such that $P_1 \cup P_2$ is an induced subgraph of $G$, or there exists $z \in V(G)$ such that $B_G(z, c)$ intersects every $X - Y$ path.
\end{theorem}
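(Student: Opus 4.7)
First dispose of the trivial case via Menger's theorem: if there are no two disjoint $X-Y$ paths, some single vertex $z$ meets every $X-Y$ path, and $B_G(z,0)=\{z\}$ suffices. So assume $G$ contains at least two disjoint $X-Y$ paths. The guiding idea is to try to construct the two induced paths \emph{greedily} by always taking shortest paths in residual graphs, and, if that fails, to read off the required ball from the obstruction.

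Let $P_1$ be a shortest $X-Y$ path in $G$; it is automatically induced. Put $N:=B_G(V(P_1),1)$. If $G-N$ still contains an $X-Y$ path, let $P_2$ be a shortest such one. Then $P_1$ and $P_2$ are disjoint (since $V(P_1)\subseteq N$ is disjoint from $V(P_2)$), no edge of $G$ joins them (since $V(P_2)\cap N=\emptyset$), and each is induced (a shortest path in the graph containing it), so $P_1\cup P_2$ is the desired induced subgraph. So assume every $X-Y$ path meets $N$. Write $P_1=x_0x_1\dots x_\ell$ and fix a constant $c$ to be determined. If some ball $B_G(x_i,c)$ meets every $X-Y$ path we are done with $z=x_i$; otherwise for each $i$ there is an $X-Y$ path $Q_i$ disjoint from $B_G(x_i,c)$, and because $Q_i$ still meets $N$, it enters $N$ only through the closed neighbourhood of some $x_j$ with $|i-j|>c$.

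This pins down a combinatorial picture: each $Q_i$ reaches $P_1$ only near its two ends, far from the middle. The plan is to pick two indices $i<j$ in the interior of $P_1$ with $j-i$ large, and splice together pieces of $Q_i$ and $Q_j$ with short detours via $N$ near the ends of $P_1$, building two $X-Y$ walks $W_1,W_2$ that lie in disjoint ``regions'' of $G$. Replacing $W_1,W_2$ by shortest $X-Y$ paths in their respective residual subgraphs (in the same spirit as in the first paragraph) converts them into two disjoint induced paths that are anti-complete, contradicting the assumption that no such pair exists and finishing the proof once $c$ is chosen large enough for the region-separation to yield distance at least $2$.

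The main obstacle is the splicing step: the paths $Q_i$ can be long and themselves mutually close in $G$, so the construction must exploit both the geodesic nature of $P_1$ and the fact that every $X-Y$ path is confined to the thin shell $N$ near $P_1$. A potential-function argument — minimising, say, $|V(W_1)|+|V(W_2)|$ over all such splicings — should force the eventual two paths to be genuinely anti-complete, and this is where the explicit value of $c$ is pinned down. I expect this technical surgery to be the crux, and I expect Theorem~\ref{thm:main} to fall out as the $d=2$ special case of the stronger Menger-type result advertised in the abstract, whose proof should proceed along exactly these lines with $d$ replacing the constant $2$ throughout.
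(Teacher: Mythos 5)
Your first paragraph is fine and matches the paper's opening move (the paper takes a shortest $X-Y$ path $P$ and looks at the components of $G-B_G(P,5d)$; your radius-$1$ shell is the $d=2$ analogue), and the observation that shortest $X-Y$ paths in residual graphs are induced and mutually anti-complete is correct. But from that point on the proposal has a genuine gap: the entire content of the theorem is precisely the step you defer. First, the "combinatorial picture" you assert is not justified: a path $Q_i$ avoiding $B_G(x_i,c)$ need not reach $P_1$ "only near its two ends" --- it only avoids the ball around $x_i$ and may attach to $N$ at arbitrarily many places elsewhere along $P_1$, and different $Q_i$'s (and the components of $G-N$ carrying them) can attach to overlapping stretches of $P_1$ in complicated, nested or interlocking ways. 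Second, the splicing of "pieces of $Q_i$ and $Q_j$ with short detours via $N$" into two far-apart $X-Y$ walks is exactly where the difficulty lies, and no argument is given; your suggestion that minimising $|V(W_1)|+|V(W_2)|$ "should force" anti-completeness does not work, since such a minimisation only turns an already disjoint anti-complete pair into an induced one --- it cannot create separation between two walks that are close to each other or both close to $P_1$.

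For comparison, the paper spends all of its technical effort on this missing step: it records, for each component $H$ of $G-B_G(P,5d)$, the interval of $P$ that $H$ can reach, proves an exact dichotomy for systems of intervals (either an interlaced, clean subsystem exists, or some point of $[A,B]$ together with a bounded window separates the system --- giving the ball $B_G(z,cd)$), and then, in the interlaced case, runs a delicate routing argument ("fruit trees" and "orchards", with Claims 1--5 controlling distances along $P$ and between components) to build the two paths that alternate between components and subpaths of $P$ while staying at distance at least $d$. None of this is replaced by anything in your proposal, and your closing sentence concedes that the "technical surgery" is still to be done; as it stands the argument proves only the easy reductions, not the theorem.
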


Theorem \ref{thm:main} immediately follows from a more general result where we ask that the two paths be at distance at least $d$.
\begin{restatable}{theorem}{general}
\label{thm:main_general}
Let $c=129$.  For all graphs $G$, $X, Y \subseteq V(G)$, and integers $d \ge 1$, either there exist two disjoint $X-Y$ paths $P_1$, $P_2$ such that $\dist_G(P_1, P_2) \ge d$ or there exists $z \in V(G)$ such that $B_G(z, cd)$ intersects every $X - Y$ path.
\end{restatable}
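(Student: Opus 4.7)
The plan is to argue the contrapositive. Assume that for every vertex $z$, the ball $B_G(z, cd)$ fails to meet some $X-Y$ path, and produce two disjoint $X-Y$ paths at mutual distance $\ge d$. As a warm-up, Menger's theorem dispatches the case in which no two internally disjoint $X-Y$ paths exist at all, since then a single cut vertex $v$ suffices with $B_G(v,0) \subseteq B_G(v,cd)$. Hence I may assume two disjoint $X-Y$ paths are available.

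Fix a shortest $X-Y$ path $P = p_0 p_1 \dots p_k$ and consider its midpoint $m$. Two regimes emerge. In the short regime, where $k$ is bounded by roughly $2(c-1)d$, all of $V(P)$ lies inside $B_G(m, (c-1)d)$; the hypothesis produces an $X-Y$ path $Q$ missing $B_G(m, cd)$, and a triangle-inequality computation forces every vertex of $Q$ to be at distance $\ge d$ from every vertex of $P$, so $\{P, Q\}$ is the required pair. In the long regime, where $k$ is much larger than $cd$, the midpoint $m$ is genuinely far from $X$ and $Y$, so any alternative path $Q_m$ avoiding $B_G(m, cd)$ must detour around the interior of $P$ and can approach $P$ only near its endpoints $p_0 \in X$ and $p_k \in Y$.

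In the long regime I would proceed by a rerouting argument. Let $R$ be a shortest $V(P){-}V(Q_m)$ path; if $|R| \ge d$, then $P, Q_m$ is already the required pair, so assume $|R| < d$. The configuration $P \cup Q_m \cup R$ admits four natural $X-Y$ routings formed by switching at $R$, two of which can be chosen to be internally disjoint. To upgrade pointwise disjointness to distance $\ge d$ everywhere, I would invoke the hypothesis again at other carefully selected centers along $P$ (say at a quarter point and at a three-quarter point), producing further avoiding paths $Q_{m_1}, Q_{m_2}$. A pigeonhole or uncrossing argument on how each $Q_{m_i}$ approaches $P$ then yields two disjoint $X-Y$ paths whose vertices are pairwise $\ge d$ apart.

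The main obstacle I foresee is controlling distances globally, not just near the chosen centers. Each appeal to the hypothesis only guarantees avoidance of one ball of radius $cd$, so the argument must track bounds along the entire rerouted paths, including the regions close to $X \cup Y$ where the alternative paths are free to meet $P$. Each stacked invocation of the hypothesis contributes an additive $cd$ to the estimates, and combined with the additive cost $<d$ from the short connector $R$, a constant number of such steps accumulates to the explicit constant $c = 121$. Verifying that the final pair of constructed paths satisfies the pointwise distance bound in every region, and that the stitching produces genuine internally disjoint $X-Y$ paths, is where I expect the detailed case analysis to live.
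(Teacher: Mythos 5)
Your outline works in the easy regimes (Menger for $d=1$-type degeneracies, and the ``short regime'' where $P$ fits inside one ball), but the long regime is where the entire difficulty of the theorem lives, and there your plan has a genuine gap. Invoking the hypothesis at a bounded number of centres (midpoint, quarter points) and performing one switch at a shortest connector $R$ cannot work in general. First, $Q_m$ only avoids one ball of radius $cd$; it is free to hug $P$ at distance $1$ everywhere else, and it need not even be disjoint from $P$, so the ``four natural routings obtained by switching at $R$'' are not well defined as two disjoint $X-Y$ paths, let alone paths at distance $\ge d$. Second, and more fundamentally, two far-apart $X-Y$ paths may be forced to exchange roles with respect to $P$ an \emph{unbounded} number of times: if the regions of $G$ far from $P$ only span short overlapping stretches of $P$ (think of corridors alternately available above and below a long path), then any pair of far-apart paths must weave back and forth between ``near $P$'' and ``far from $P$'' arbitrarily often. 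A construction that reroutes at a constant number of places, with the key step deferred to ``a pigeonhole or uncrossing argument,'' cannot produce this, and that deferred step is exactly the content of the theorem.

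The paper's proof supplies the missing global structure. It takes a shortest $X-Y$ path $P$, considers \emph{all} components of $G-B_G(P,5d)$, and projects each component $H$ to an interval $[a(H),b(H)]$ of indices along $P$. A purely combinatorial lemma about systems of intervals then gives an exact dichotomy: either there is an ``interlaced'' subsequence of intervals (consecutive ones overlapping, each advancing by a large buffer), or there is a single index $j$ such that the ball of radius $cd$ around $v_j$ meets every $X-Y$ path -- this is how the single blocking centre $z$ is found, rather than by guessing finitely many centres. In the interlaced case the two paths are built by weaving through the (possibly arbitrarily many) components of the interlaced system, organized via $d$-small ``fruit trees'' and a minimal ``orchard''; the minimality and a sequence of explicit distance estimates (Claims 1--5 of the paper) guarantee $\dist_G(P_1,P_2)\ge d$ at every stage. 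If you want to salvage your approach, the piece you must add is precisely such an exhaustive analysis of how all far-away components project onto $P$, together with a mechanism for an unbounded number of switches whose pairwise distances are controlled uniformly.
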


After submitting our paper, we learned that Georgakopoulos and Papasoglu have independently proved~\cref{thm:main_general} (with a slightly worse constant of $272$)~\cite{GP23}.

We conjecture that Theorem \ref{thm:main_general} should generalize to an arbitrary number of $X-Y$ paths at pairwise distance $d$.

\begin{CN} \label{kinducedpaths}
There exists a constant $c$ satisfying the following. For all graphs $G$, $X, Y \subseteq V(G)$, and integers $d, k \ge 1$, either there exist $k$ disjoint $X-Y$ paths $P_1, \dots, P_k$ such that $\dist_G(P_i, P_j) \ge d$ for all distinct $i,j$ or there exists a set $Z \subseteq V(G)$ of size at most $k-1$ such that $B_G(Z, cd)$ intersects every $X - Y$~path.
\end{CN}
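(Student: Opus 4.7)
The plan is to proceed by induction on $k$. The base case $k = 1$ is trivial (take any single $X-Y$ path, or $Z = \emptyset$ if none exists), and $k = 2$ is exactly \cref{thm:main_general}. For the inductive step, suppose the statement holds for $k-1$ with constant $c$, and consider a graph $G$ with $X, Y \subseteq V(G)$, $k \ge 3$, and $d \ge 1$.

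First apply the inductive hypothesis. If it returns a set $Z'$ of size at most $k-2$ such that $B_G(Z', cd)$ intersects every $X-Y$ path, then $Z := Z'$ already satisfies the conclusion since $|Z| \le k-2 \le k-1$. Otherwise we obtain $k-1$ pairwise $d$-far apart $X-Y$ paths $P_1, \dots, P_{k-1}$, and the task reduces to either producing a $k$-th $X-Y$ path at distance at least $d$ from each $P_i$, or producing $k-1$ vertices $z_1, \dots, z_{k-1}$ whose balls of radius $cd$ together intersect every $X-Y$ path. A first attempt is to inspect $G' := G - B_G(\bigcup_i V(P_i), d-1)$: any $X-Y$ path in $G'$ would serve as $P_k$. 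If none exists, every $X-Y$ path in $G$ enters some ball $B_G(V(P_i), d-1)$, giving a structural but not yet a vertex hitting set.

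The core of the argument should then be to replace each $P_i$ by a single vertex $z_i$. For each $i$, I would consider the family $\zQ_i$ of $X-Y$ paths that come within distance $d$ of $P_i$ but remain at distance at least $d$ from every other $P_j$, and apply the techniques underlying \cref{thm:main_general} locally to show that either one can extract a path in $\zQ_i$ which, combined with $P_1, \dots, P_{k-1}$, extends the family to $k$ pairwise $d$-far apart $X-Y$ paths, or a single vertex $z_i$ satisfies that $B_G(z_i, cd)$ intersects every path in $\zQ_i$. The hope is that $\{z_1, \dots, z_{k-1}\}$ then also captures every $X-Y$ path close to two or more of the $P_i$, either directly or after a suitable global rerouting of $(P_1, \dots, P_{k-1})$.

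The principal obstacle is precisely the interaction between different paths: an $X-Y$ path close to several of the $P_i$'s lies in no single $\zQ_i$, and the local application of \cref{thm:main_general} does not guarantee that any $z_i$ hits such a path. A global exchange argument that modifies $(P_1, \dots, P_{k-1})$ until every near-shared $X-Y$ path can be safely assigned to some $z_i$ is likely required; this is the combinatorially delicate step the conjecture leaves open. A secondary difficulty is that naive induction may inflate the constant $c$ with $k$, whereas the conjecture demands a single universal constant. Circumventing this likely requires either a direct proof that adapts the argument of \cref{thm:main_general} to build a hitting set of size $k-1$ from the outset, or a careful choice of intermediate distance parameters in the inductive step that keeps the constant bounded.
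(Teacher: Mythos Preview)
The statement you are attempting to prove is \cref{kinducedpaths}, which in the paper is a \emph{conjecture}, not a theorem. The paper does not contain a proof of this statement; it only establishes the case $k=2$ (\cref{thm:main_general}) and records the McCarty--Seymour reduction (\cref{d=3reduction}) showing that the $d=3$ case would imply the general case. So there is no ``paper's own proof'' to compare against.

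Your proposal is therefore not so much a proof as an outline of an inductive strategy together with an honest diagnosis of why it stalls. Both obstacles you identify are real and are precisely why the conjecture remains open: (i) an $X-Y$ path that comes close to several of the $P_i$ is not captured by any single local application of the $k=2$ result, and no exchange or rerouting argument is currently known to force such paths into the scope of one centre $z_i$; (ii) a naive induction on $k$ would let the constant depend on $k$, contrary to the statement. Your sketch does not overcome either difficulty, and you acknowledge as much. In short, this is a reasonable discussion of the problem, but it is not a proof, and the paper does not claim one either.
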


Rose McCarty and Paul Seymour proved that the $d=3$ case of~\cref{kinducedpaths} actually implies the general case. The proof has not been published, and we thank them for allowing us to include their proof in this paper.

\begin{theorem}[McCarty and Seymour, 2023] \label{d=3reduction}
If~\cref{kinducedpaths} holds for $d=3$ with constant $c$, then it holds for all $d \geq 3$ with constant $3c$.
\end{theorem}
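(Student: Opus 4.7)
My plan is to reduce an arbitrary instance $(G, X, Y, d, k)$ with $d \geq 4$ to an instance handled by the $d = 3$ hypothesis, by constructing an auxiliary graph $G^*$ whose distances rescale those of $G$ by a factor of approximately $d/3$. The $d = 3$ case of~\cref{kinducedpaths} applied to $(G^*, X^*, Y^*, 3, k)$ yields either $k$ paths in $G^*$ at pairwise $G^*$-distance at least $3$, which should translate back to $k$ paths in $G$ at pairwise $G$-distance at least $d$, or a set $Z^*$ with $|Z^*| \leq k-1$ such that $B_{G^*}(Z^*, 3c)$ hits every $X^*$-$Y^*$ path, which should translate to a set $Z \subseteq V(G)$ of the same size with $B_G(Z, cd)$ hitting every $X$-$Y$ path.

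For the construction I would take $t = \lceil d/3 \rceil$, let $G^*$ be the graph obtained from $G$ by subdividing each edge into a path of length $t$, and set $X^* = X$, $Y^* = Y$. With this choice $\dist_{G^*}(u,v) = t \cdot \dist_G(u,v)$ for every pair of original vertices $u,v$. The cover case translates cleanly: project each $z^* \in Z^*$ to a nearest original vertex on its subdivided edge to obtain $Z \subseteq V(G)$ with $|Z| \leq k-1$. For any $X$-$Y$ path $P$ in $G$ and its natural lift $P^*$ in $G^*$, there is $w^* \in V(P^*)$ with $\dist_{G^*}(w^*, Z^*) \leq 3c$; projecting $w^*$ to the nearest original vertex $w \in V(P)$ and using the triangle inequality yields $\dist_G(w, Z) \leq 1 + 3c/t \leq 1 + 9c/d \leq cd$, the last inequality being harmless for $d \geq 4$ and any $c \geq 1$.

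The main obstacle is the paths case. Each $P_i^*$ naturally contracts to a path $P_i$ in $G$ (by forgetting subdivision vertices), and the $P_i$ are vertex-disjoint, but the $G^*$-distance $\geq 3$ between the $P_i^*$ only certifies $G$-distance $\geq \lceil 3/t \rceil$ between the $P_i$, which is far below $d$. To close this gap one needs to enrich $G^*$ so that the $G^*$-distance-$3$ condition genuinely encodes $G$-distance $\geq d$ between the contracted paths---for instance by additionally making every two original vertices of $G$ at $G$-distance at most $\lfloor (d-1)/2 \rfloor$ adjacent in $G^*$, effectively combining the subdivision with a graph power, or by inserting local gadgets near each original vertex, combined with a careful choice of $X^*, Y^*$. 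The delicate point, and the crux of the proof, is to do this while preserving the clean translation of the cover case with the \emph{same} constant $c$; coordinating both directions of the dichotomy simultaneously is what makes the reduction non-trivial.
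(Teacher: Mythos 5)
Your proposal proves only the cover half of the dichotomy and leaves the paths half genuinely open, so as it stands it is not a proof. The reason is structural, not a matter of missing details: subdividing rescales distances in the \emph{wrong} direction for the paths outcome. In the $t$-fold subdivision $G^*$ with $t=\lceil d/3\rceil$, every $X$--$Y$ path of $G^*$ has its ends at original vertices and hence traverses subdivided edges in full, so two disjoint lifted paths $P_1^*,P_2^*$ satisfy $\dist_{G^*}(P_1^*,P_2^*)\ge t$ automatically; in particular, once $d\ge 7$ (so $t\ge 3$), the conclusion ``$\dist_{G^*}\ge 3$'' is satisfied by \emph{every} pair of disjoint lifted paths, even ones joined by an edge of $G$. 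Thus the $d=3$ hypothesis applied to $G^*$ certifies nothing beyond plain disjointness, as you yourself compute ($\lceil 3/t\rceil=1$), and the repairs you sketch (adding adjacencies between vertices at $G$-distance at most $\lfloor (d-1)/2\rfloor$, or local gadgets) are exactly the unproved crux: you acknowledge that coordinating the two outcomes is the whole difficulty, and you do not carry it out.

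The paper's proof (due to McCarty and Seymour) resolves this by rescaling in the opposite direction, with no subdivision at all: let $H$ be the $d$-th power of $G$ and apply the $d=3$ case to $(H,X,Y,k)$. Since $B_H(v,r)=B_G(v,rd)$ and every $X$--$Y$ path of $G$ contains an $X$--$Y$ path of $H$, the cover outcome pulls back directly with the radius scaled by $d$. For the paths outcome, each path $P_j=v_1\dots v_t$ of $H$ is converted into a $G$-path $P_j^*$ by replacing each edge $v_iv_{i+1}$ with a $G$-path of length at most $d$; every vertex of $P_j^*$ is then within $G$-distance $d/2$ of $V(P_j)$, so a $G$-path of length at most $d$ between $P_i^*$ and $P_j^*$ would yield a $V(P_i)$--$V(P_j)$ connection of length at most $\frac{d}{2}+d+\frac{d}{2}=2d$, i.e.\ $\dist_H(P_i,P_j)\le 2$, contradicting $\dist_H(P_i,P_j)\ge 3$; hence $\dist_G(P_i^*,P_j^*)\ge d$. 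Because powering contracts all distances uniformly, both branches of the dichotomy translate favourably at the same time, which is precisely the coordination that subdivision cannot provide. Your instinct that a graph power is the missing ingredient is correct, but it should replace the subdivision rather than be grafted onto it.
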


\begin{proof}
Let $H$ be the $d$th power of $G$, that is, $V(H)=V(G)$, and $u$ and $v$ are adjacent in $H$ if and only if $\dist_G(u,v) \le d$. If there exist $k-1$ balls of radius $3c$ in $H$ whose union intersects all $X-Y$ paths in $H$, then there are $k-1$ balls of radius $3cd$ in $G$ so that their union intersects all $X-Y$ paths in $G$. Otherwise (by assumption), there are $X-Y$ paths $P_1, \dots, P_k$ in $H$ such that $\dist_H(P_i,P_j) \ge 3$ for all distinct $i,j$. Suppose $P_1=v_0 \dots v_t$. For each $0 \le i \le t-1$, there is a path $S_i$ of length at most $d$ in $G$ between $v_i$ and $v_{i+1}$. Thus, there is a $v_0-v_t$ path $P_1^*$ in $G$ such that $P_1^* \subseteq \bigcup_{i=0}^{t-1} S_i$.  We define $P_2^*, \dots P_k^*$ similarly.
We claim that $\dist_G(P_i^*, P_j^*) \ge d$ for all distinct $i,j$.  Towards a contradiction, suppose $\dist_G(P_i^*, P_j^*) \le d$ for some $i<j$.  Then there is a $V(P_i)-V(P_j)$ path in $G$ of length at most $\frac{d}{2}+d+\frac{d}{2} \le 2d$. Hence, $\dist_H(P_i,P_j) \le 2$, which is a contradiction.
\end{proof}

We would like to point out that Nguyen, Scott, and Seymour~\cite{NSS24} have recently found a counterexample to~\cref{kinducedpaths}.  

Our proof shows that we may take $c=129$ in~\cref{thm:main_general}.  On the other hand, the following lemma shows that $c$ must be at least $3/2$.

\begin{LE}\label{lem:lowerbound}
For every integer $d \geq 2$, there exists a graph $G$ and $X, Y \subseteq V(G)$ such that there do not exist two disjoint $X-Y$ paths $P_1, P_2$ such that $dist_G(P_1, P_2) \ge d$, nor does there exist a vertex $z \in V(G)$ such that $B_G(z, 3d/2-2)$ intersects every $X-Y$ path.
\end{LE}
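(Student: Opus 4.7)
I would prove the lemma by exhibiting, for each $d\ge 2$, an explicit graph $G=G_d$ and sets $X,Y\subseteq V(G_d)$ and verifying both clauses by direct case analysis. A preliminary analysis on the minimum $X$--$Y$ vertex cut $\kappa$ pins down the shape the construction must take. If $\kappa\le 1$, a single vertex meets every $X$--$Y$ path and $B_G(\cdot,0)$ already witnesses failure of the second clause. If $\kappa\ge 3$, Menger's theorem gives three pairwise vertex-disjoint $X$--$Y$ paths $P_1,P_2,P_3$, each pair of which is at distance $\le d-1$ by the first clause; taking $z$ to be the midpoint of a shortest $V(P_1)$--$V(P_2)$ path yields $\dist_G(z,P_1),\dist_G(z,P_2)\le\lceil(d-1)/2\rceil$ and, by the triangle inequality, $\dist_G(z,P_3)\le\lceil(d-1)/2\rceil+(d-1)\le 2d-3$ for $d\ge 3$, so $B_G(z,2d-3)$ meets all three, contradicting the second clause again. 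Hence any construction must have $\kappa=2$, with the two Menger paths at distance $<d$, and must compensate with extra $X$--$Y$ paths (necessarily non-disjoint from the Menger pair) that are spread around $G$ so that no single $(2d-3)$-ball can hit them all.

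\textbf{Candidate construction.} Guided by this, the natural template is to take $G$ to be a cycle $C$ of length roughly $4d-4$ with two ``antennae'' (pendant paths) of length at least $2d-3$ attached at two antipodal cycle vertices $v_0$ and $v_{2d-2}$; set $X$ to be a pair of suitably chosen cycle vertices (for instance, the two ``perpendicular'' quarter-points $v_{d-1},v_{3d-3}$) and $Y$ to be the far ends of the two antennae. Every $X$--$Y$ path is then the concatenation of a cycle arc from an $X$-vertex to either $v_0$ or $v_{2d-2}$ with the corresponding antenna; there are only a handful of such ``basic'' paths, and a short enumeration gives them explicitly.

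\textbf{Main steps.} First, list all $X$--$Y$ paths explicitly (a small finite family). Second, verify the first clause by identifying the disjoint pairs among these paths and computing their distances directly; here the disjoint pairs route through opposite halves of the cycle and meet at distance exactly $d-1<d$ via the short arcs between their nearest $X$-endpoints. Third, verify the second clause by a case analysis in the location of $z$: partition $V(G)$ according to which quadrant of the cycle or which antenna $z$ belongs to, and in each case exhibit a basic $X$--$Y$ path whose vertex set lies entirely outside $B_G(z,2d-3)$, essentially the path running along the opposite arc of $C$ together with the opposite antenna.

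\textbf{Main obstacle.} The delicate point is the third step. The cycle $C$ alone has diameter only $2d-2$, so a $(2d-3)$-ball confined to $C$ misses only a single vertex and hits every cycle-path; the antennae of length $\ge 2d-3$ are needed precisely to push the $Y$-endpoints of some $X$--$Y$ path out of reach of any $z$ on the ``wrong'' side of $G$. The calibration is tight: lengthening the cycle or the antennae risks producing a pair of vertex-disjoint $X$--$Y$ paths at distance $\ge d$ and spoiling the first clause, while shortening leaves every $X$--$Y$ path within some $(2d-3)$-ball centered at the cut. I expect the small cases $d=2$ and $d=3$ (where the centroid reduction above is either useless or tight) will require either a separate ad hoc construction or a careful re-checking, and this is likely where the bulk of the combinatorial care goes.
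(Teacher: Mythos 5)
Your preliminary reduction to the case $\kappa = 2$ does not go through, and the candidate construction it suggests actually fails the lemma. The step ``$\dist_G(z,P_3) \le \lceil (d-1)/2 \rceil + (d-1)$ by the triangle inequality'' is not valid for the set-to-set distance $\dist_G$: the vertex of $P_1$ attaining $\dist_G(z,P_1)$ need not be anywhere near the vertex of $P_1$ attaining $\dist_G(P_1,P_3)$, so the two bounds do not chain. Even granting it, you would only have shown that $B_G(z,2d-3)$ meets $P_1$, $P_2$, $P_3$, not that it meets \emph{every} $X$--$Y$ path, which is what the second clause requires. And in fact the conclusion $\kappa=2$ is false: the paper's construction is a $3\times 7$ grid with added short diagonal edges, with $X$ and $Y$ the left and right columns (so $\kappa = 3$), each edge then subdivided $d-2$ times.

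More concretely, the cycle-with-antennae graph fails the second clause outright. Every $X$--$Y$ path must pass through one of the two antenna attachment points $v_0$, $v_{2d-2}$, as these are cut vertices separating the antennae from the cycle. But with $z = v_{d-1} \in X$ we have $\dist_G(z,v_0) = \dist_G(z,v_{2d-2}) = d-1 \le 2d-3$ for every $d \ge 2$, so both cut vertices lie in $B_G(z,2d-3)$, and that ball meets every $X$--$Y$ path. This kills the ``third step'' of your plan for all $d$, not only for the small cases you flag as delicate. Structurally, the problem is that a construction with a two-vertex $X$--$Y$ cut cannot avoid a single ball centred on or near that cut; the paper's grid example circumvents this precisely by arranging a width-three cut whose members are mutually far apart at every cross-section.
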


\begin{proof}
Consider the graph $H$ in \cref{fig1}.  If we let $X = \{x_1, x_2, x_3\}$ and $Y = \{y_1, y_2, y_3\}$, then there do not exist two disjoint $X-Y$ paths $P_1$ and $P_2$ such that $\dist_{H}(P_1, P_2) \ge 2$.  However, neither does $H$ have a vertex $z$ such that $B_{H}(z, 1)$ intersects every $X-Y$ path.
\begin{figure}[ht]
\centering
\begin{tikzpicture}[every node/.style={draw, circle, fill=black, inner sep=1.5pt}]

  \node [label=left:$x_1$](11)  at (1,-1) {};
  \node[label=left:$x_2$](12)  at (1,-2) {};
  \node[label=left:$x_3$](13) at (1,-3) {};
  \foreach \x in {2,...,6} {
    \foreach \y in {1,...,3} {
      \node (\x\y) at (\x,-\y) {};
    }
    }
  \node [label=right:$y_1$](71) at (7,-1) {};
  \node[label=right:$y_2$](72)  at (7,-2) {};
  \node[label=right:$y_3$](73) at (7,-3) {};
  \foreach \x in {2,...,6} {
    \foreach \y in {1,...,3} {
      \node (\x\y) at (\x,-\y) {};
    }
    }
    \foreach \x in {1,7} {
    \foreach \y in {1,2} {
      \node at (\x,-\y-0.5) {};
    }
    }

  \foreach \x in {1,...,6} {
    \foreach \y in {1,...,3} {
      \pgfmathtruncatemacro{\nextx}{\x+1}
      \draw (\x\y) -- (\nextx\y);
    }
  }

  \foreach \x in {1,...,7} {
    \foreach \y in {1,...,2} {
      \pgfmathtruncatemacro{\nexty}{\y+1}
      \draw (\x\y) -- (\x\nexty);
    }
  }

  \foreach \i in {1,...,5} {
       \pgfmathtruncatemacro{\next}{\i+2}
      \draw[bend left=20](\i1) to (\next3);
}
\end{tikzpicture}
\caption{A graph without two disjoint anti-complete $\{x_1, x_2, x_3\}-\{y_1, y_2, y_3\}$ paths and no ball of radius one hitting all $X-Y$ paths.}\label{fig1}
\end{figure}
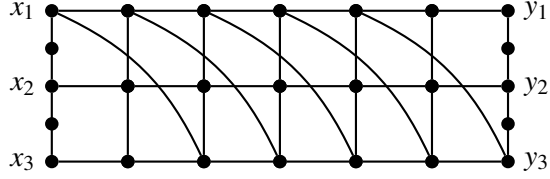
Let $G$ be the graph obtained from $H$ by subdividing each edge exactly $d-2$ times.  We claim that $G$ satisfies the lemma.  To see this, let $z$ be any vertex in $G$.
Then there is a vertex $y \in H$ of distance at most $(d-1)/2$ to $z$ in $G$.
Every other vertex of $H$ has distance at least $(d-1)/2$ to $z$ in $G$.
Since any two distinct vertices of $H$ are at distance at least $d-1$ in $G$ and $B_{H}(y,1)$ does not intersect every $X$--$Y$ path in $H$, it follows that $B_{G}(z, d-3/2+(d-1)/2)$  does not intersect every $X$--$Y$ path in $G$.
\end{proof}

We conclude this section with some necessary notation.

Let $G$ be a graph and $H_1, H_2$ two subgraphs of $G$.  The subgraph \defn{$H_1 \cup H_2$} is the subgraph with vertex set $V(H_1) \cup V(H_2)$ and edge set $E(H_1) \cup E(H_2)$.   An \defn{$H_1$-path} is a path with both ends in $H_1$ and no internal vertex or edge in $H_1$.  Let $X\subseteq V(G)$.  An \defn{$X$-path} is any $J$-path where $J$ is the subgraph of $G$ with vertex set $X$ and no edges. For a path $P = v_0v_1 \dots v_n$ and $0 \leq i \leq j \leq n$, we write $v_iP := v_iv_{i+1} \dots v_n$, $Pv_i := v_0v_1 \dots v_i$ and $v_iPv_j := v_iv_{i+1} \dots v_j$ for the corresponding subpaths of $P$.


\section{Interlaced systems of intervals}

In this section we prove a weak characterization of when a sequence of intervals contains an ``interlaced'' subsequence.  We begin with the definition of interlaced. See \cref{fig:ABrsystem} for a visualisation.

\begin{figure}[ht]
    \centering
    \begin{tikzpicture}
        \def\intervalA[#1](#2:#3)(#4:#5) 
            {
                \ifthenelse{#1=1}
                {
                    \draw[teal] (#2,1.5*#1) node[anchor=south]{#4} -- (#3, 1.5*#1) node[anchor=south]{#5};
                    \draw[teal,dotted] (#2,0) -- (#2,1.5*#1-#1*0.15);
                    \draw[teal,dotted] (#3, 1.5*#1-#1*0.15) -- (#3, 0);
                    \draw[teal] (#2+0.075,1.5*#1-#1*0.1) -- (#2,1.5*#1-#1*0.1) -- (#2, 1.5*#1+#1*0.1) -- (#2+0.075,1.5*#1+#1*0.1);
                    \draw[teal] (#3-0.075,1.5*#1-#1*0.1) -- (#3,1.5*#1-#1*0.1) -- (#3, 1.5*#1+#1*0.1) -- (#3-0.075,1.5*#1+#1*0.1);
                }
                {
                    \draw[teal] (#2,1.5*#1) node[anchor=north]{#4} -- (#3, 1.5*#1) node[anchor=north]{#5};
                    \draw[teal,dotted] (#2,0) -- (#2,1.5*#1-#1*0.15);
                    \draw[teal,dotted] (#3, 1.5*#1-#1*0.15) -- (#3, 0);
                    \draw[teal] (#2+0.075,1.5*#1-#1*0.1) -- (#2,1.5*#1-#1*0.1) -- (#2, 1.5*#1+#1*0.1) -- (#2+0.075,1.5*#1+#1*0.1);
                    \draw[teal] (#3-0.075,1.5*#1-#1*0.1) -- (#3,1.5*#1-#1*0.1) -- (#3, 1.5*#1+#1*0.1) -- (#3-0.075,1.5*#1+#1*0.1);
                };
            }
            
        \intervalA[1](-8:-5)($a_1$:$b_1$)
        \intervalA[-1](-6:-3)($a_2$:$b_2$)
        \intervalA[1](-4:-1)($a_3$:$b_3$)
        \intervalA[-1](-2:1)($a_4$:$b_4$)

        \def\distanceA[#1](#2:#3)(#4) 
            {
                    \draw[red] (#2,#1*#4) -- (#3, #1*#4) node[midway,fill=white]{$\ge \ell$};
                    \draw[red] (#2,#1*#4-0.15) -- (#2,#1*#4+0.15);
                    \draw[red] (#3,#1*#4-0.15) -- (#3,#1*#4+0.15);
                    \draw[red,dotted] (#2,#1*#4-#1*0.15) -- (#2, 0);
                    \draw[red,dotted] (#3,#1*#4-#1*0.15) -- (#3, 0);
            }

        \distanceA[-1](-6:-4)(0.5)
        \distanceA[-1](-5:-3)(1)
        \distanceA[1](-4:-2)(0.5)
        \distanceA[1](-3:-1)(1)

        \draw[thick] (-8,0) -- (1,0);
        
        \draw[thick] (-8,0.15) -- (-8,-0.15) node[anchor=north east] {$A$};
        \draw[thick] (1,0.15) -- (1,-0.15) node[anchor=north west] {$B$};
    
        \draw[blue] (-7,0.15) -- (-7,-0.15) node[anchor=north] {$A+r$};
        \draw[blue] (0,0.15) -- (0,-0.15) node[anchor=north] {$B-r$};
        
    \end{tikzpicture}
    
    \captionof{figure}{A clean~$(A,B,r)$-system $([a_i,b_i])_{i \in [4]}$ indicated in teal which is interlaced with buffer~$\ell$.}
    \label{fig:ABrsystem}
\end{figure}

\begin{DEF}\label{Hyp1}
Let $r \in \mathbb{N}$, and $A,B \in \mathbb{Z}$ with $A \leq B$.  An \defn{$(A,B,r)$-system} is a sequence
 $([a_i, b_i])_{i \in [t]}$ of closed intervals of the real line\footnote{We will in fact only consider the integers in such intervals.} such that for all $i \in [t]$,
\begin{itemize}
\item $a_i, b_i \in \mathbb{Z}$,
\item $[a_i, b_i] \subseteq [A, B]$,
\item if $a_i \neq A$, then $a_i \geq A+r$, and
\item if $b_i \neq B$, then $b_i \leq B-r$.
\end{itemize}
Moreover, $([a_i, b_i])_{i \in [t]}$ is \defn{interlaced with buffer $\ell$} for an integer $\ell >0$ if
\begin{itemize}
\item $a_1=A, b_t=B$, and
\item $a_{i} + \ell \le a_{i+1} \le b_{i}$ and $b_{i} + \ell \leq b_{i+1}$ for all $i \in [t-1]$.
\end{itemize}
The system is \defn{clean} if $[a_i, b_i] \cap [a_j, b_j] \neq \emptyset$ implies that $|i-j| \le 1$.
\end{DEF}

Note that if $([a_i, b_i])_{i \in [t]}$ is interlaced with buffer $\ell$, then the $a_i$ and $b_i$ values are both strictly increasing.
An $(A,B,r)$-system is clean if and only if we can draw its intervals above and below the line  without any overlaps (cf. \cref{fig:notcleanABrsystem}). 

\begin{figure}[ht]
    \centering
    \begin{tikzpicture}

        \draw[thick] (-8,0) -- (1,0);
        
        \draw[thick] (-8,0.15) -- (-8,-0.15) node[anchor=north east] {$A$};
        \draw[thick] (1,0.15) -- (1,-0.15) node[anchor=north west] {$B$};
    
        \draw[blue] (-7,0.15) -- (-7,-0.15) node[anchor=north] {$A+r$};
        \draw[blue] (0,0.15) -- (0,-0.15) node[anchor=north] {$B-r$};

        \def\intervalA[#1](#2:#3)(#4:#5) 
            {
                \ifthenelse{#1=1}
                {
                    \draw[teal] (#2,#1) node[anchor=south]{#4} -- (#3, #1) node[anchor=south]{#5};
                    \draw[teal,dotted] (#2,0) -- (#2,#1-#1*0.15);
                    \draw[teal,dotted] (#3, #1-#1*0.15) -- (#3, 0);
                    \draw[teal] (#2+0.075,#1-#1*0.1) -- (#2,#1-#1*0.1) -- (#2, #1+#1*0.1) -- (#2+0.075,#1+#1*0.1);
                    \draw[teal] (#3-0.075,#1-#1*0.1) -- (#3,#1-#1*0.1) -- (#3, #1+#1*0.1) -- (#3-0.075,#1+#1*0.1);
                }
                {
                    \draw[teal] (#2,#1) node[anchor=north]{#4} -- (#3, #1) node[anchor=north]{#5};
                    \draw[teal,dotted] (#2,0) -- (#2,#1-#1*0.15);
                    \draw[teal,dotted] (#3, #1-#1*0.15) -- (#3, 0);
                    \draw[teal] (#2+0.075,#1-#1*0.1) -- (#2,#1-#1*0.1) -- (#2, #1+#1*0.1) -- (#2+0.075,#1+#1*0.1);
                    \draw[teal] (#3-0.075,#1-#1*0.1) -- (#3,#1-#1*0.1) -- (#3, #1+#1*0.1) -- (#3-0.075,#1+#1*0.1);
                };
            }

        \intervalA[1](-8:-5)(:)
        \intervalA[-1](-6:-3)(:)
        \intervalA[1](-4:-1)(:)
        \intervalA[-1](-2:1)(:)
        
        \draw[red] (-4.5,1.5) -- (-2.25, 1.5);
        \draw[red,dotted] (-4.5,0) -- (-4.5,1.5-1.5*0.15);
        \draw[red,dotted] (-2.25, 1.5-1.5*0.15) -- (-2.25, 0);
        \draw[red] (-4.5+0.075,1.5-1.5*0.1) -- (-4.5,1.5-1.5*0.1) -- (-4.5, 1.5+1.5*0.1) -- (-4.5+0.075,1.5+1.5*0.1);
        \draw[red] (-2.25-0.075,1.5-1.5*0.1) -- (-2.25,1.5-1.5*0.1) -- (-2.25, 1.5+1.5*0.1) -- (-2.25-0.075,1.5+1.5*0.1);
        
    \end{tikzpicture}
    
    \captionof{figure}{The~$(A,B,r)$-system indicated in teal is clean.  However, adding the red interval gives an $(A,B,r)$-system which is not clean.}
    \label{fig:notcleanABrsystem}
    
\end{figure}

\begin{LE}\label{lem:clean}
Let $([a_i, b_i])_{i \in [t]}$ be an interlaced system with buffer $\ell$.  Then there exists a subsequence containing $[a_1, b_1]$ and $[a_t, b_t]$ which is a clean interlaced system with buffer $\ell$.
\end{LE}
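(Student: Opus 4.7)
The plan is a greedy construction from left to right. Set $j_1 = 1$, and for each $p \geq 1$ with $j_p < t$, let $j_{p+1}$ be the \emph{largest} index $i$ such that $a_i \leq b_{j_p}$. Because the original system is interlaced we have $a_{j_p+1} \leq b_{j_p}$, so this maximum exists and satisfies $j_{p+1} \geq j_p + 1$. Since the $j_p$ are strictly increasing and bounded above by $t$, and since $a_t \leq b_{t-1}$ by interlacedness of the original, the process must terminate with $j_s = t$ for some $s$. This guarantees that the resulting subsequence contains both $[a_1, b_1]$ and $[a_t, b_t]$.

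Next, I would verify that the subsequence is interlaced with buffer $\ell$. The endpoint conditions $a_{j_1} = A$ and $b_{j_s} = B$ are inherited directly from $j_1 = 1$ and $j_s = t$. The inequality $a_{j_{p+1}} \leq b_{j_p}$ holds by the defining property of $j_{p+1}$. The buffer inequalities $a_{j_p} + \ell \leq a_{j_{p+1}}$ and $b_{j_p} + \ell \leq b_{j_{p+1}}$ follow by applying the buffer conditions of the original sequence (which give $a_i + \ell \leq a_{i+1}$ and $b_i + \ell \leq b_{i+1}$) along the gap from $j_p$ to $j_{p+1} \geq j_p + 1$.

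Finally, I would argue cleanliness. Observe that since the $a$-values of the original (and hence the selected) sequence are strictly increasing, for $p < q$ the intervals $[a_{j_p}, b_{j_p}]$ and $[a_{j_q}, b_{j_q}]$ meet precisely when $a_{j_q} \leq b_{j_p}$. The maximality of $j_{p+1}$ forces $a_i > b_{j_p}$ for every index $i > j_{p+1}$; therefore for $q \geq p+2$ we have $a_{j_q} \geq a_{j_{p+1}+1} > b_{j_p}$, which is exactly the cleanliness condition. (If $j_{p+1} = t$, then no such $q$ exists and there is nothing to check.)

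I do not anticipate a serious obstacle: the whole proof is a direct greedy selection, and the key insight is that the single choice of ``pick $j_{p+1}$ as large as possible'' simultaneously enforces cleanliness (via maximality) while the buffer property of the original sequence is inherited for free because the selected indices only increase by at least one at each step. The mildest point of care is confirming termination at $t$, which follows from strict monotonicity of the $j_p$'s together with $a_t \leq b_{t-1}$.
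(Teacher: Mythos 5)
Your proof is correct. The paper argues by induction on $t$: if the system is not clean, it picks a violating pair $[a_i,b_i]$, $[a_j,b_j]$ with $j \ge i+2$, notes that deleting the intermediate intervals preserves interlacedness with buffer $\ell$ (since $a_i+\ell \le a_{i+1} \le a_j \le b_i$ and $b_i + \ell \le b_{i+1} \le b_j$), and applies the induction hypothesis to the shorter sequence. You instead build the subsequence explicitly in a single left-to-right greedy pass, choosing $j_{p+1}$ as the largest index $i$ with $a_i \le b_{j_p}$, so that cleanliness falls out of maximality ($a_i > b_{j_p}$ for all $i > j_{p+1}$, hence $a_{j_q} > b_{j_p}$ whenever $q \ge p+2$) rather than from repeated deletion. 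Both arguments hinge on the same observation — skipping from $[a_{j_p},b_{j_p}]$ to any later interval whose left endpoint is at most $b_{j_p}$ preserves the buffer conditions because the $a_i$ and $b_i$ each increase by at least $\ell$ per step — so they are close in spirit; yours yields an explicit algorithm and makes the clean subsequence canonical, at the cost of a slightly longer verification, while the paper's inductive phrasing is quicker to write down. One small remark: the appeal to $a_t \le b_{t-1}$ in your termination argument is superfluous — the $j_p$ are strictly increasing integers bounded above by $t$ and the process halts only at $t$, so it must end with $j_s = t$.
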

\begin{proof}
We proceed by induction on $t$.  Note that the lemma trivially holds if $t = 1$.  Assume that $[a_1, b_1], \dots, [a_t, b_t]$ is not clean. Thus, there are integers $i,j \in [t]$ such that $[a_i, b_i] \cap [a_j, b_j] \neq \emptyset$ and $j \ge i+2$.  Hence, $a_j \le b_i$.  Moreover, $a_i+\ell \le a_{i+1} \le a_j$ and  $b_i+\ell \le b_{i+1} \le b_j$.
Thus, $[a_1, b_1], \dots, [a_i, b_i], [a_j, b_j], \dots, [a_t, b_t]$ is an interlaced system with buffer $\ell$ which is strictly shorter.  Applying induction to this shorter sequence yields the desired subsequence of the original sequence, proving the claim.
\end{proof}

An immediate consequence of the definition of a clean interlaced system is that every element $x \in [A,B]$ is contained in at most two intervals.

\begin{LE}\label{lem:intseq}
Let $([a_i, b_i])_{i \in [t]}$ be an $(A,B, r)$-system with $A \leq B-2r$ and let $\ell \le r$ be a positive integer.  Then either $([a_i, b_i])_{i \in [t]}$ contains an $(A,B,r)$-subsystem which is interlaced with buffer $\ell$, or there exists an integer $z \in [A+r, B-r]$ such that, for $Z = [z - 2\ell, z + 2\ell] \cap (A, B)$, no interval in $([a_i, b_i])_{i \in [t]}$ intersects both segments of $[A,B] \setminus Z$.
\end{LE}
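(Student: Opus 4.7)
My plan is to attempt a greedy left-to-right construction of an interlaced subsystem with buffer $\ell$, and convert any failure of the greedy into a separator $z$. I first dispose of the degenerate case: if no interval has $a_i = A$, or the maximum $b$-value $b_{j_1}$ among intervals $[A, \cdot]$ satisfies $b_{j_1} < A+r$, then I choose $z = A+r$. The left segment of $[A,B] \setminus Z$ is then contained in $\{A\} \cup [A+1, A+r-2\ell-1]$, while every interval either has $a \geq A+r$ (so its left endpoint lies strictly to the right of the left segment) or $a = A$ with $b < A+r \leq z+2\ell+1$ (so its right endpoint fails to reach the right segment); either way, no interval crosses.

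Otherwise, I start with $I_1 = [A, b_{j_1}]$ where $b_{j_1} \geq A+r$ is chosen maximum, and iteratively extend: given $I_1, \ldots, I_s$ with $b_{j_s} < B$, I select $I_{s+1} = [a,b]$ in the system satisfying $a \in [a_{j_s}+\ell, b_{j_s}]$ and $b \geq b_{j_s}+\ell$, picking one with $b$ maximum. If this terminates with $b_{j_t} = B$, the chain is the desired interlaced subsystem. Otherwise the greedy fails at some step $s+1$, and I take $z := b_{j_s}$; since $b_{j_s} \geq b_{j_1} \geq A+r$ and $b_{j_s} \neq B$ forces $b_{j_s} \leq B-r$, this $z$ lies in $[A+r, B-r]$.

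To verify $z$ is a separator, I suppose some interval $[a,b]$ crosses $Z = [z-2\ell, z+2\ell] \cap (A,B)$, so $a \leq z-2\ell-1$ (or $a = A$) and $b \geq z+2\ell+1$ (or $b = B$); in particular $b > b_{j_s}$. The case $a = A$ yields $b > b_{j_1}$, contradicting maximality of $b_{j_1}$. If $a \geq A+r$ and $a \geq a_{j_s}+\ell$, then $[a,b]$ satisfies the extension conditions at step $s+1$ with $b > b_{j_s}$, contradicting greedy failure.

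The hard part is the remaining sub-case, where $a \geq A+r$ but $a < a_{j_s}+\ell$. Here my strategy is to trace backward along the chain to find the largest index $k \leq s$ with $a \geq a_{j_k}+\ell$ (which exists since $a \geq A+r \geq A+\ell = a_{j_1}+\ell$); if $a \leq b_{j_k}$, then $[a,b]$ was a valid candidate at step $k+1$ with $b > b_{j_{k+1}}$, contradicting greedy's maximum-$b$ choice. The truly delicate sub-sub-case is $a > b_{j_k}$: the interlacing conditions then force $b_{j_k} < a < a_{j_{k+1}}+\ell \leq b_{j_k}+\ell$, so $a$, $a_{j_{k+1}}$, and $b_{j_k}$ cluster into an interval of length less than $\ell$. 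This tight alignment is the main obstacle; I expect to resolve it either by strengthening the greedy with a secondary tiebreaker (for instance minimizing $a_{j_{k+1}}$ among maximum-$b$ candidates, to push the chain's $a$-values as far left as possible and rule out the alignment) or, if this fails, by switching to a different choice of $z$ exploiting the alignment---a natural candidate being $z := a$ itself, which blocks crossings from the right via any $[A, \cdot]$ interval as long as $b_{j_1}$ is suitably bounded.
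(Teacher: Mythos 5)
Your greedy construction handles the easy cases, but the ``sub-sub-case'' you flag is a genuine gap rather than a technicality, and neither of the fixes you sketch closes it. Concretely, take $A=0$, $B=100$, $r=10$, $\ell=2$ and the intervals $[0,12]$, $[12,20]$, $[20,28]$, $[13,90]$ (a valid $(A,B,r)$-system). The greedy chain is forced to be $[0,12],[12,20],[20,28]$: the interval $[13,90]$ is never a legal extension, since $13>12$ and $13<12+\ell$. The greedy then gets stuck, your rule sets $z=b_{j_s}=28$ and $Z=[24,32]$, but $[13,90]$ meets both $[0,23]$ and $[33,100]$, so this $z$ is not a separator (the lemma itself is fine here: $z=11$ works). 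Since every greedy step had a unique candidate, no tiebreaker such as minimizing $a_{j_{k+1}}$ can change the chain, so your first proposed repair cannot help. Your second repair, resetting $z:=a$, happens to work in this example, but in general it only rules out crossings by intervals starting at $A$; when $k\ge 2$ there may be another interval $[a',b']$ with $A+r\le a'\le a-2\ell-1$ and $b'\ge a+2\ell+1$ crossing the new $z$, so you would have to iterate the whole argument, and you give no induction or termination measure for that. This unresolved alignment case ($b_{j_k}<a<a_{j_{k+1}}+\ell$) is exactly the missing content of the proof.

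The paper avoids the difficulty by not committing to a single chain. Call $y\in[A+r,B-r]$ \emph{good} if some interlaced subsystem with buffer $\ell$ ends in an interval $[a_{i_s},b_{i_s}]$ with $y\in[a_{i_s}+\ell,b_{i_s}-\ell]$; if $B-r$ is good one extracts the desired subsystem, and otherwise $z$ is taken to be the \emph{smallest} non-good point. Any interval $[a_k,b_k]$ crossing the corresponding $Z$ then either has $a_k=A$ (and is by itself a chain certifying that $z$ is good) or has $a_k\ge A+r$ with $a_k<z$, so $a_k$ is good by minimality, and appending $[a_k,b_k]$ to a chain witnessing this (after noting $b_{i_s}<z+\ell$) again certifies that $z$ is good --- a contradiction. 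The quantification over all chains ending near $a_k$, built into the definition of ``good,'' is precisely what rescues the configuration your fixed greedy chain cannot see. If you want to salvage your write-up, replace $z=b_{j_s}$ by this ``first bad point'' and argue by minimality instead of by properties of one maximal chain.
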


\begin{proof}

We say an integer $y \in [A+r, B-r]$ is \defn{good} if $([a_i, b_i])_{i \in [t]}$ has an interlaced $(A,B',\ell)$-subsystem $[a_{i_1}, b_{i_1}], \dots, [a_{i_s}, b_{i_s}]$ with buffer $\ell$ such that $y \in [a_{i_s} + \ell, b_{i_s}-\ell]$.  If~$B-r$ is good, then we find the desired interlaced subsystem. Indeed, then there exists an interlaced $(A,B',\ell)$-subsystem $[a_{i_1}, b_{i_1}], \dots, [a_{i_s}, b_{i_s}]$ with buffer $\ell$ such that $B-r \in  [a_{i_s} + \ell, b_{i_s}-\ell]$.  Thus, $b_{i_s} > B - r$, which implies $b_{i_s} = B$ since $([a_i, b_i])_{i \in [t]}$ is an $(A,B,r)$-system.  Hence, we have $B'=B$ and $[a_{i_1}, b_{i_1}], \dots, [a_{i_s}, b_{i_s}]$ is as desired.

Thus, we may assume that some integer in~$[A+r, B-r]$ is not good, and we let~$z$ be the smallest such integer.  We claim that~$z$ is as desired.  So let $Z = [z - 2\ell, z + 2\ell] \cap (A, B)$ and assume for a contradiction that there exists an interval~$[a_k, b_k]$ which intersects both segments of $[A,B] \setminus Z$. We fix such an interval~$[a_k, b_k]$ for the remainder of the proof.

Suppose $a_k = A$.  Since $b_k > z + 2\ell$, the interval $[a_k, b_k]$ by itself is an interlaced system with buffer $\ell$ certifying that $z$ is good, a contradiction to our choice of $z$.  Thus, $a_k \ge A + r$. Since $a_k < z$, it follows that $a_k$ is good by the minimality of $z$.

Let $[a_{i_1}, b_{i_1}], \dots, [a_{i_s}, b_{i_s}]$ be an interlaced $(A,B',\ell)$-subsystem with buffer $\ell$ such that $a_k \in  [a_{i_s} + \ell, b_{i_s}-\ell]$.
First, we observe that $b_{i_s} < z + \ell$.  Otherwise, $z \in [a_{i_s} + \ell, b_{i_s} - \ell]$, contradicting the fact that $z$ is not good.  Since $a_k \in [a_{i_s} + \ell, b_{i_s} - \ell]$ and $b_k \ge z + 2\ell \ge b_{i_s} + \ell$, it follows that $[a_{i_1}, b_{i_1}], \dots, [a_{i_s}, b_{i_s}], [a_k, b_k]$ is an interlaced system with buffer $\ell$.  However, $z \in [a_k + \ell, b_k - \ell]$ implying that $z$ is good.  This again contradicts our choice of $z$ and thus completes the proof.
\end{proof}


\section{Proof of the Main Theorem}

We now prove our main theorem, which we restate for convenience.

\general*

\begin{proof}
Let $d_1=5d$ and $d_2=62d$.  If $G$ has no $X-Y$ path, we may take $z$ to be any vertex of $G$.  Thus, let $P=v_0 \dots v_n$ be a shortest $X-Y$ path in $G$.  Let $\mathcal{H}$ be the family of  components of $G-B_G(P, d_1)$.  If some $H \in \mathcal{H}$ contains an $X-Y$ path $W$, then we may take $P_1=P$ and $P_2=W$.  Note that $\dist_G(P_1, P_2) \ge d_1 \ge d$.  Thus, we may assume that no $H \in \mathcal{H}$ contains vertices of both $X$ and $Y$.

For each $H \in \mathcal{H}$, we define $a(H)$ to be $-d_2$ if $H$ contains a vertex of $X$.  Otherwise, $a(H)$ is the smallest index $j$ such that $\dist_G(v_j,H) = d_1+1$.  Similarly, we define $b(H)$ to be $n+d_2$ if $H$ contains a vertex of $Y$.  Otherwise, $b(H)$ is the largest index $j$ such that $\dist_G(v_j,H) = d_1+1$.  For each $H \in \mathcal{H}$, we define~$I(H)$ to be the interval $[a(H), b(H)]$ (see \cref{fig:componentsH_i}).  Let $\mathcal{H}' \subseteq \mathcal{H}$ be the components which correspond to the maximal intervals in $I(\mathcal{H}):=\{I(H): H \in \mathcal{H}\}$.    Observe that $I(\mathcal{H}'):=\{I(H): H \in \mathcal{H}'\}$ is a $(-d_2, d_2+n, d_2)$-system.

By~\cref{lem:intseq}, either there is an ordering $H_1, \dots, H_m$ of a subset of $\mathcal{H}'$ such that $I(H_1), \dots, I(H_m)$ is an interlaced system with buffer $d_2$, or there exists an integer $j \in [0, n]$ such that no interval in $I(\mathcal{H}')$
intersects both segments of $[-d_2,d_2+n] \setminus Z$ for $Z = [j - 2 d_2, j + 2 d_2] \cap (-d_2,d_2+n)$.

\begin{figure}[ht]
    \centering
    \begin{tikzpicture}

        \draw[thick] (-5,0.5) -- (5,0.5);

        \node (P) at (-4.1,0.5) {};
        \node[below = 0.5pt of P]{\large $P$};

        \node[circle, fill=black, inner sep=1.5pt](v0) at (-5,0.5) {};
        \node[below = 0.5pt of v0]{$v_0$};

        \node[circle, fill=black, inner sep=1.5pt](va2) at (-3,0.5) {};
        \node[below = 0.5pt of va2]{$v_{a(H_2)}$};

        \node[circle, fill=black, inner sep=1.5pt](vb1) at (-1.5,0.5) {};
        \node[below = 0.5pt of vb1]{$v_{b(H_1)}$};

        \node[circle, fill=black, inner sep=1.5pt](vb2) at (1.5,0.5) {};
        \node[below = 0.5pt of vb2]{$v_{b(H_2)}$};

        \node[circle, fill=black, inner sep=1.5pt](vam) at (3,0.5) {};
        \node[below = 0.5pt of vam]{$v_{a(H_m)}$};

        \node[circle, fill=black, inner sep=1.5pt](vn) at (5,0.5) {};
        \node[below = 0.5pt of vn]{$v_n$};


        \path[name path = d1, draw, white] (-6,1.5) -- (6,1.5); 
        \path[name path = d2, draw, blue, dashed] (-5,1.5) -- (5,1.5);
        \draw[blue, <->] (-4.5,1.5) -- (-4.5,0.5)
        node[midway,left]{\scriptsize $d_1$};


        \draw[orange] (-4,2.5) ellipse (1 and 0.75);
        \node[text=orange] at (-4, 3.5) {$H_1$};
        \node[circle, fill=red, inner sep=1pt](xX) at (-4.4,2.5) {};
        \node[red, above = 0pt of xX]{\scriptsize $x \in X$};

        \draw[orange] (-1,2.5) ellipse (1 and 0.75);
        \node[text=orange] at (-1, 3.5) {$H_2$};

        \node[circle, fill=orange, inner sep=1pt](dot) at (1,2.5) {};
        \node[circle, fill=orange, inner sep=1pt, right= 0.4 of dot](dot2) {};
        \node[circle, fill=orange, inner sep=1pt, right= 0.4 of dot2] {};

        \draw[orange] (4,2.5) ellipse (1 and 0.75);
        \node[text=orange] at (4, 3.5) {$H_m$};
        \node[circle, fill=red, inner sep=1pt](yY) at (4.4,2.5) {};
        \node[red, above = 0pt of yY]{\scriptsize $y \in Y$};


        \def\grayArrow(#1:#2) 
            {
                \path[name path = line1, draw, gray] (#1) -- (#2);
            }

        \node[circle, draw=gray, inner sep=1.5pt](va1) at (-6.5,0.5) {};
        \node[gray, below = 0.5pt of va1]{};
        \node[align=center,gray, below = 0.5pt of va1] (lab) at (va1) {$-d_2$~~\\$= a(H_1)$~~~~};
        \draw[gray, dotted] (xX) -- (va1);

        \node[inner sep=1.5pt](h1) at (-3.6, 2.5) {};
        \grayArrow(h1:vb1)

        \node[inner sep=1.5pt](h2a) at (-1.7, 2.5) {};
        \grayArrow(h2a:va2)

        \node[inner sep=1.5pt](h2b) at (-1.25, 2.5) {};
        \node[inner sep=0pt](v2b) at (-1,0.5) {};
        \grayArrow(h2b:v2b)

        \node[inner sep=1.5pt](h2c) at (-0.75, 2.5) {};
        \node[inner sep=0pt](v2c) at (0,0.5) {};
        \grayArrow(h2c:v2c)

        \node[inner sep=1.5pt](h2d) at (-0.3, 2.5) {};
        \grayArrow(h2d:vb2)

        \node[inner sep=1.5pt](hm) at (3.6, 2.5) {};
        \grayArrow(hm:vam)
        
        \node[circle, draw=gray, inner sep=1.5pt](vbm) at (6.5,0.5) {};
        \node[gray, below = 0.5pt of vbm]{};
        \node[align=center,gray, below = 0.5pt of vbm] (lab) at (vbm) {~~~$d_2+n$\\~~~~~$= b(H_m)$};
        \draw[gray, dotted] (yY) -- (vbm);

    \end{tikzpicture}
    
    \caption{The components~$H_i$ along with their respective~$a(H_i)$ and~$b(H_i)$ represented by the corresponding vertex in $P$ or a vertex in $X$ or $Y$.}
    \label{fig:componentsH_i}
\end{figure}

Suppose the latter holds. We claim that $B_G(z, cd)$ intersects all the $X-Y$ paths for $z := v_j$. 
Suppose not and let $Q$ be an $X-Y$ path which avoids $B_G(z, cd)$.
Let $R_1 := Pv_{j-2d_2-1}$ and $R_2 :=  v_{j+2d_2+1}P$.
Since $cd \geq 2d_2 + d_1$, the path $Q$ in particular avoids $B_G(v_{j-2d_2} P v_{j+2d_2}, d_1) \supseteq  B_G(P,d_1) \setminus (B_G(R_1,d_1) \cup B_G(R_2,d_1))$.

For $i \in \{1,2\}$ let $\mathcal{H}_i$ be those components $H \in \mathcal{H}$ which have a neighbour in $B_G(R_i,d_1)$.  
By assumption, no interval in $I(\mathcal{H}')$ intersects both segments of $[-d_2,d_2+n] \setminus Z$. Since  $\mathcal{H}' \subseteq \mathcal{H}$ are the components which correspond to the maximal intervals in $I(\mathcal{H})$, we conclude that no interval in $I(\mathcal{H})$ intersects both segments of $[-d_2,d_2+n] \setminus Z$.  For each $H \in \mathcal{H}$, the choice of $a(H)$ and $b(H)$ implies that if $v_h$ is a vertex of $P$ such that $\dist_G(v_h, H)=d_1+1$, then $h \in I(H)=[a(H), b(H)]$.  Therefore, we conclude that $\mathcal{H}_1$ and $\mathcal{H}_2$ are disjoint, no $H \in \mathcal{H}_1$ meets $Y$, and no $H \in \mathcal{H}_2$ meets $X$.

Since $P$ is a shortest path, $\dist_G(v_k,v_\ell) = |k - \ell|$ for all $0 \leq k, \ell \leq n$.
Hence, $\dist_G(R_1, R_2) = 4d_2+2$.
Since an edge in $G$ from $B_G(R_1, d_1)$ to $B_G(R_2, d_1)$ would witness that $\dist_G(R_1, R_2) \leq 2d_1 + 1$ but $2d_1+1 < 4d_2+2$, it follows that no such edge exists.

Altogether, the~$X-Y$ path~$Q$ avoiding~$B_G(z, cd)$ must be contained in $B_G(R_i, d_1) \cup \bigcup \mathcal{H}_i$ for some $i \in \{1,2\}$.
We consider the case $i=1$ as the case $i=2$ is analogous.
Since no $H \in \mathcal{H}_1$ meets $Y$, the end $y$ of $Q$ in $Y$ is in $B_G(R_1, d_1)$.
Thus, $v_0 \in X$ and $y \in Y$ have distance at most $j-2d_2-1 +d_1 < n$ in $G$, which contradicts that $P$ has length~$n$ and is a shortest $X-Y$ path in $G$.
 
 Therefore, we may assume there is an ordering $H_1, \dots, H_m$ of a subset of $\mathcal{H}'$ such that $I(H_1), \dots, I(H_m)$ is an interlaced system with buffer $d_2$. 
 By~\cref{lem:clean}, we may also assume that $I(H_1), \dots, I(H_m)$ is clean.  Let $S \subseteq [m-1]$ be the set of indices $i$ such that $b(H_i) - a(H_{i+1}) \le 30d$.
 For each $i \in S$, there must be some component $C_i$ of $G-B_G(P, d_1)$ such that $a(C_i) \le a(H_{i+1})-d_2$ and $b(C_i) \ge b(H_{i})+d_2$.  Otherwise, we may take $z$ to be $v_{a(H_{i+1})}$, since $cd \ge d_2 + 30d +d_1$.  Note that $C_i \notin \{H_1, \dots, H_m\}$ for all $i \in S$.  Moreover, recall that for each $j \in [m]$, $I(H_j)$ is a maximal element of $I(\mathcal{H})$ under inclusion.  Therefore, $a(H_i) < a(C_i)$ and $b(C_i) < b(H_{i+1})$ for all $i \in S$.
 Together, we have $a(H_i) < a(C_i) < a(H_{i+1})$ and $b(H_i) < b(C_i) < b(H_{i+1})$.
 Since the $a(H_i)$ and the $b(H_i)$ are strictly increasing, this also implies that $C_{i_1} \neq C_{i_2}$ for all distinct $i_1, i_2 \in S$.
 Suppose $|S|=s$.  Re-index the elements in $\mathcal{M}:=\{H_1, \dots, H_m\}$ and $\mathcal{S}:=\{C_1, \dots, C_s\}$ so that all indices in $[m+s]$ appear exactly once, $a(H_{i-1}) \le a(C_i)$ and $b(C_i) \le b(H_{i+1})$ for all $C_i$, and the re-indexing preserves the original order of $H_1, \dots, H_m$ and of $C_1, \dots C_s$.   Rename the corresponding sequence $H_1', \dots H_{m+s}'$.

 \begin{claim}\label{dist13}
     $\dist_G(v_{b(H_j')}, v_{a(H_{j+1}')}) \geq 30d$ for every $j \in [m+s-1]$.
 \end{claim}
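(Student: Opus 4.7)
My plan rests on two observations. The first is that since $P$ is a shortest $X$-$Y$ path, any two of its vertices satisfy $\dist_G(v_p, v_q) = |p - q|$: otherwise we could splice a shorter $G$-path into $P$ and extract a strictly shorter $X$-$Y$ path. Thus the claim reduces to the purely combinatorial statement $|b(H_j') - a(H_{j+1}')| \geq 24d$ for every $j \in [m + s - 1]$, together with the check that both indices genuinely lie in $[1, n]$. The latter follows from the fact that the strictly increasing $a$'s and $b$'s of the interlaced system force $H_1$ to be the unique element of $\mathcal{M}$ meeting $X$ and $H_m$ the unique element meeting $Y$, while the inequalities $a(H_i) < a(C_i) \le a(H_{i+1}) - d_2$ and $b(H_i) + d_2 \le b(C_i) < b(H_{i+1})$ show that no $C_i \in \mathcal{S}$ meets $X$ or $Y$. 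After the order-preserving reindexing, $H_1$ appears only as $H_1'$ and $H_m$ only as $H_{m+s}'$, so they never play the role of the ``inner'' endpoint in the claim.

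It remains to verify the index gap by case analysis on the types of $H_j'$ and $H_{j+1}'$. Two consecutive $\mathcal{S}$-elements cannot occur, because each $C_k$ is inserted between its specific pair $H_i, H_{i+1}$ in the reindexing. If both $H_j'$ and $H_{j+1}'$ lie in $\mathcal{M}$, say $H_i$ and $H_{i+1}$, the absence of a $C$ between them means $i \notin S$, so by definition of $S$ the gap is $b(H_i) - a(H_{i+1}) > 24d$. If $H_j' = H_i$ and $H_{j+1}' = C_k$ is the buffer between $H_i$ and $H_{i+1}$, then $a(C_k) \leq a(H_{i+1}) - d_2$, combined with the interlacing inequality $a(H_{i+1}) \leq b(H_i)$, gives $b(H_i) - a(C_k) \geq d_2 = 58d$. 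The symmetric case $H_j' = C_k$, $H_{j+1}' = H_{i+1}$ uses $b(C_k) \geq b(H_i) + d_2$ and $b(H_i) \geq a(H_{i+1})$ to give $b(C_k) - a(H_{i+1}) \geq d_2$.

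In each case the index gap is at least $24d$, and combining this with the shortest-path observation above yields the claim. I expect no real obstacle here: this is essentially a bookkeeping argument that propagates the gap guarantees baked into the definition of $S$ (in the $\mathcal{M}$-only case) and into the construction of the $C_k$ (in the mixed cases) through the reindexing, with the boundary check above ensuring that the path vertices in the claim are well-defined.
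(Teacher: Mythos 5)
Your proof is correct and follows essentially the same route as the paper's: reduce the $\dist_G$ statement to an index-gap statement via the shortest-path property of $P$, then split into the $\mathcal{M}$--$\mathcal{M}$ case (handled by the definition of $S$, i.e.\ $i \notin S$) and the mixed $\mathcal{M}$/$\mathcal{S}$ cases (handled by $a(C_i) \le a(H_{i+1}) - d_2$, resp.\ $b(C_i) \ge b(H_i) + d_2$, combined with the interlacing inequality $a(H_{i+1}) \le b(H_i)$). Your additional boundary check that the relevant indices lie in $[1,n]$ is left implicit in the paper but is a correct and welcome detail.
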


 \begin{proof}
     If $H_j', H_{j+1}' \in \mathcal{M}$, then the claim follows directly from the choice of $\mathcal{S}$.
     Thus, we may assume that $H_j' \in \mathcal{M}$ and $H_{j+1}' \in \mathcal{S}$, since the case $H_{j}' \in \mathcal{S}$ and $H_{j+1}' \in \mathcal{M}$ is analogous.
     Then we have that $a(H_{j+1}') + d_2 \leq a(H_{j+2}') \leq b(H_j')$.
     Thus, the claim follows from~$d_2 \geq 30d$.
\end{proof}

\begin{claim}\label{dist42}
    $\dist_G(v_{a(H'_j)}, v_{a(H'_{j+2})}) \ge d_2$ for every $j \in [m+s-2]$.
\end{claim}

\begin{proof}
    If at least two of $\{H'_j, H'_{j+1}, H'_{j+2}\}$ are in $\mathcal{M}$, then the claim follows from the choice of $\mathcal{M}$. Otherwise, $H'_j \in \mathcal{S}$ and $H'_{j+2} \in \mathcal{S}$. Then the claim follows from the choice of $\mathcal{S}$.
\end{proof}

\begin{claim} \label{dist3}
	If $q \geq p+5$, then $\dist_G(H_p', H_q') \ge d_2-2d_1+2-30d$.  Moreover, if $q \geq p+7$, then $\dist_G(H_p', H_q') \ge d_2-2d_1+2$.
\end{claim}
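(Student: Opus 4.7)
The plan is to take a shortest $H_p'$-$H_q'$ path $R$ of length $L := \dist_G(H_p', H_q')$ in $G$ and analyze how it exits $V(H_p')$ and enters $V(H_q')$ via $B_G(P, d_1)$. Since $H_p'$ and $H_q'$ are distinct components of $G - B_G(P, d_1)$, $R$ has a first vertex $u_s \in B_G(P, d_1)$, which is adjacent to $V(H_p')$, and a last such vertex $u_t$, which is adjacent to $V(H_q')$; in particular $s \ge 1$ and $L - t \ge 1$. Any $v_i \in V(P)$ within distance $d_1$ of $u_s$ satisfies $\dist_G(v_i, H_p') = d_1+1$ (the lower bound holds because $V(H_p') \cap B_G(P, d_1) = \emptyset$), so $i \in [a(H_p'), b(H_p')]$; symmetrically pick $v_{i'}$ near $u_t$ with $i' \in [a(H_q'), b(H_q')]$. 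Since $P$ is a shortest path, the triangle inequality gives
\[
|i - i'| = \dist_G(v_i, v_{i'}) \le \dist_G(v_i, u_s) + \dist_G(u_s, u_t) + \dist_G(u_t, v_{i'}) \le 2d_1 + L - 2,
\]
so $L \ge |i - i'| + 2 - 2d_1$.

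It now suffices to show $b(H_p') < a(H_q')$ with $a(H_q') - b(H_p')$ large, since the minimum of $|i - i'|$ over $i \in [a(H_p'), b(H_p')]$ and $i' \in [a(H_q'), b(H_q')]$ is then $a(H_q') - b(H_p')$. The tools are: (i) cleanness and the buffer $d_2$ of the $\mathcal{M}$-system, giving $a(H_{j_2}) - b(H_{j_1}) \ge 1 + (\ell - 2) d_2$ whenever $j_2 - j_1 \ge \ell \ge 2$; (ii) the inequalities $a(H_i) < a(C_i) \le a(H_{i+1}) - d_2$ and $b(H_i) + d_2 \le b(C_i) < b(H_{i+1})$ for each $C_i \in \mathcal{S}$; and (iii) the defining condition $a(H_{j+1}) \ge b(H_j) - 24d$ for $j \in S$. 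Setting $\mu(H_i) = i$ and $\mu(C_i) = i + \tfrac12$, each step of the re-indexed sequence advances $\mu$ by at least $\tfrac12$, so $\mu(H_q') - \mu(H_p') \ge (q-p)/2$.

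For $q \ge p + 7$, a case split on whether $H_p', H_q'$ lie in $\mathcal{M}$ or $\mathcal{S}$ shows the corresponding $\mathcal{M}$-index gap is at least $3$ in every case; combining (i) and (ii) yields $a(H_q') - b(H_p') \ge d_2 + 2$, so $L \ge d_2 - 2d_1 + 4 > d_2 - 2d_1 - 2$. For $q \ge p + 5$, when both endpoints lie in $\mathcal{M}$ the $\mathcal{M}$-gap is at least $3$ and (i) alone yields $a(H_q') - b(H_p') \ge d_2 + 1$. The tight sub-cases are $H_p' = H_k, H_q' = C_{k+2}$; $H_p' = C_k, H_q' = H_{k+3}$; and $H_p' = C_k, H_q' = C_{k+3}$. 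In each, the count of intermediate positions of the re-indexed sequence forces some index $j \in \{k+1, k+2\}$ to lie in $S$, and invoking (iii) at this $j$ gives $a(H_{j+1}) \ge b(H_j) - 24d$; combined with the buffer inequality $b(H_j) \ge b(H_k) + d_2$ from (i) and the $\mathcal{S}$-shifts from (ii), this propagates to $a(H_q') \ge b(H_p') + d_2 - 24d + 1$, and hence $L \ge d_2 - 2d_1 - 24d + 3 > d_2 - 2d_1 - 2 - 24d$.

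The main obstacle is the sub-case analysis for $q \ge p + 5$: one must identify, in each of the three tight sub-cases above, the correct intermediate $j \in S$ forced by the structure of the re-indexed sequence, and then chain together (i), (ii), and (iii) to produce the required bound. Once the right $j \in S$ has been pinned down, the remaining manipulations are mechanical.
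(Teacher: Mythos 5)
Your proposal is correct and follows essentially the same route as the paper's proof: bound $\dist_G(H_p',H_q')$ from below by projecting the ends of a shortest connecting path onto $P$ near indices in $[a(H_p'),b(H_p')]$ and $[a(H_q'),b(H_q')]$, use that $P$ is a shortest $X$--$Y$ path together with the triangle inequality, and then lower-bound the index gap via cleanness, the buffer $d_2$, the maximality/shift inequalities for the $C_i$, and the $24d$ condition defining $S$. The only difference is organizational (your half-integer potential and explicit tight endpoint-type sub-cases versus the paper's count of $\mathcal{M}$-elements among $H_p',\dots,H_q'$ and the alternating-sequence observation), and the sketched chains in your tight sub-cases do check out.
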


\begin{proof}
    Let $q \geq p+5$ and let $Q$ be a shortest $H_p'-H_q'$ path.  Let $h_p$ and $h_q$ be the ends of $Q$ in $H_p'$ and $H_q'$, respectively, and let~$h_p'$ be the neighbour of~$h_p$ on~$Q$ and~$h_q'$ the neighbour of~$h_q$ on~$Q$. Since both~$H_p'$ and~$H_q'$ are components of~$G - B_G(P, d_1)$, we have~$h_p' \in B_G(v_\ell, d_1)$ for some~$\ell \le b(H_p')$ and $h_q' \in B_G(v_r, d_1)$ for some $r \ge a(H_q')$. Then
    \[
    \dist_G(v_\ell, v_r) \le \dist_G(v_\ell, h_p') + \dist_G(h_p', h_q') + \dist_G(h_q', v_r) = 2d_1 + (||Q|| - 2).
    \]

    Suppose there are at least four elements of $\mathcal{M}$ among $H_p', \dots, H_q'$.  Since $(I(H_i))_{i=1}^k$ is a clean interlaced system with buffer~$d_2$, it follows that $r-\ell \ge d_2$.  Since $P$ is a shortest $X-Y$ path, we thus obtain $\dist_G(v_\ell,v_r)=r-\ell \ge d_2$.
    Together with $\dist_G(v_\ell, v_r) \le 2d_1 + (||Q|| - 2)$, this yields $||Q|| \ge d_2-2d_1+2$. 
    
    Note that if $q \geq p+7$, then there are at least four elements of $\mathcal{M}$ among $H_p', \dots, H_q'$.
    Thus, we may assume that $q \in \{p+5, p+6\}$ and that are at most three elements of $\mathcal{M}$ among $H_p', \dots, H_q'$.  Note that this can only happen if the sequence $H_p', \dots, H_q'$ alternates between elements in $\mathcal{M}$ and $\mathcal{S}$.  By symmetry, we may assume that $H_p' \in \mathcal{S}$.  We have $\ell \leq b(H_p') \le b(H_{p+1}')$ and $a(H_{p+5}') \leq a(H_q') \leq r$.  Since $H_{p+4}' \in \mathcal{S}$, $a(H_{p+4}') \le a(H_{p+5}')-d_2$.  Since $H_{p+2}' \in \mathcal{S}$, it follows that $\dist_G(v_{b(H_{p+1}')}, v_{a(H_{p+3}')}) \le 30d$.  Therefore, $\dist_G(v_\ell, v_r) \ge d_2-30d$.   On the other hand, $\dist_G(v_\ell, v_r) \le 2d_1 + (||Q|| - 2)$, as above.  Thus, we have~$||Q|| \ge d_2-2d_1+2-30d$, as required.
\end{proof}

 Let $H'=\bigcup_{i=1}^{m+s} H_i'$. A \defn{fruit tree} is a subgraph $\mathcal{W}:=\bigcup_{i\in I} H_i' \cup W_1 \cup \dots \cup W_k$ of $G$ (see \cref{fig:fruittree}) where
 \begin{itemize}
     \item $I$ is a subset of $[m+s]$,
     \item for all $i \in [m+s]$ and $j \in [k]$, if $V(H_i') \cap V(W_j) \neq \emptyset$, then $i \in I$,
     \item for each $i \in [k]$, $W_i$ is a $\left( \bigcup_{i\in I} H_i' \cup  \bigcup_{j=1}^{i-1} W_j\right)$-path with at least one edge, and
     \item the minor $M(\mathcal{W})$ of $\mathcal{W}$ obtained by contracting $H_i'$ to a single vertex for each $i \in I$ is a tree.
 \end{itemize}

We call the paths $W_i$ the \defn{composite} paths of $\mathcal{W}$.   
Since $M(\mathcal{W})$ is a tree, the ends of $W_{i}$ are in distinct components of $\bigcup_{j\in I} H_j' \cup \bigcup_{j=1}^{i-1} W_{j}$; in particular, they are in distinct components of $\bigcup_{j=1}^{i} W_j \setminus E(W_i)$.
For each $i \in [k]$, let $n_i$ and $m_i$ be the number of composite paths of the two components of $\bigcup_{j=1}^{i} W_j \setminus E(W_i)$ containing the ends of $W_i$.
We say that $\mathcal{W}$ is \defn{$d$-small} if $W_i$ has length at most $(\max\{n_i, m_i\}+2)d$, for all $i \in [k]$.

 \begin{claim} \label{closetoH'}
     Let $\mathcal{W}$ be a $d$-small fruit tree with composite paths $W_1, \dots, W_k$ and let $C$ be a component of $\bigcup_{i=1}^k W_i$ consisting of $\ell$ composite paths. Then for every $v \in V(C)$, $\dist_{\mathcal{W}}(v, V(H')) \le \ell d$.
 \end{claim}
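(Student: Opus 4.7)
The plan is to induct on $\ell$. For the base case $\ell = 1$, $C$ consists of a single composite path $W_i$. Any other composite path sharing a vertex with $W_i$ would itself lie in $C$ (since $C$ is a component of $\bigcup_j W_j$), so both endpoints of $W_i$ must lie in $V(H')$. Consequently $n_i = m_i = 0$, and $d$-smallness gives $||W_i|| \le 2d$, so every $v \in V(C) = V(W_i)$ is within $d$ of an endpoint of $W_i$ lying in $V(H')$.

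For $\ell \ge 2$, let $W_i$ be the composite path in $C$ of largest index, with endpoints $x,y$. Let $A_i, B_i$ denote the components of $\bigcup_{j \le i} W_j \setminus E(W_i)$ containing $x$ and $y$, with $n_i$ and $m_i$ composite paths respectively; without loss of generality $n_i \ge m_i$, so $||W_i|| \le (n_i + 2)d$. Since $M(\mathcal{W})$ is a tree, $A_i$ and $B_i$ are distinct components, and since no composite path added after $W_i$ can intersect $V(C)$ (otherwise it would lie in $C$, contradicting the choice of $W_i$), we have $V(C) = V(A_i) \cup V(B_i) \cup V(W_i)$ and $\ell = n_i + m_i + 1$. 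To invoke the inductive hypothesis on $A_i$ and $B_i$, I would restrict $\mathcal{W}$ to the connected components of the forest obtained from $M(\bigcup_{j \in I} H_j' \cup \bigcup_{j<i} W_j)$ containing the images of $A_i$ and $B_i$, producing sub-fruit-trees $\mathcal{W}'_x$ and $\mathcal{W}'_y$. The crucial point is that, for each composite path $W_p$ inside $\mathcal{W}'_x$, the components of $\bigcup_{q \le p} W_q \setminus E(W_p)$ that define $n_p, m_p$ are connected purely through composite-path edges, all of which already lie inside $\mathcal{W}'_x$; hence $n_p, m_p$ are unchanged and $\mathcal{W}'_x$ remains $d$-small. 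Since $A_i$ is then a component of the composite-path union inside $\mathcal{W}'_x$ with exactly $n_i$ composite paths, the inductive hypothesis (applicable because $n_i < \ell$) yields $\dist_{\mathcal{W}}(u, V(H')) \le n_i d$ for every $u \in V(A_i)$, and symmetrically $\dist_{\mathcal{W}}(u, V(H')) \le m_i d$ for every $u \in V(B_i)$. The corner case $A_i = \{x\}$ with $n_i = 0$ is handled trivially, since then $x$ itself lies in $V(H')$.

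It remains to bound $\dist_{\mathcal{W}}(v, V(H'))$ for $v$ in the interior of $W_i$. Using the two endpoint bounds together with $\dist_{W_i}(v,x) + \dist_{W_i}(v,y) = ||W_i|| \le (n_i+2)d$ and the elementary inequality $\min(a,b) \le (a+b)/2$,
\[
    \dist_{\mathcal{W}}(v, V(H')) \le \min\bigl(\dist_{W_i}(v,x) + n_i d,\; \dist_{W_i}(v,y) + m_i d\bigr) \le \tfrac{((n_i+2) + (n_i + m_i))d}{2} = \tfrac{(2n_i + m_i + 2)d}{2},
\]
which is at most $(n_i + m_i + 1)d = \ell d$ since $m_i \ge 0$. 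The main obstacle I anticipate is the bookkeeping required to verify rigorously that the sub-fruit-trees $\mathcal{W}'_x, \mathcal{W}'_y$ really are $d$-small fruit trees in which $A_i, B_i$ appear as components of the composite-path union with the expected numbers of composite paths; once this preservation of $n_p, m_p$ across the restriction is established, the rest reduces to the arithmetic above.
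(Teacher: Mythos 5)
Your proof is correct and takes essentially the same route as the paper's: induction on $\ell$, splitting the component at its highest-index composite path, applying the inductive bound to the two side components and the $d$-smallness bound to that path's length. Your explicit construction of the sub-fruit-trees (a point the paper leaves implicit) and your averaging inequality in place of the paper's final contradiction are only presentational differences.
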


 \begin{proof}
     We proceed by induction on $\ell$. Suppose $\ell=1$. Then $C = W_i$ for some $i \in [k]$. Since $\mathcal{W}$ is $d$-small, $||W_i|| \le 2d$, and both ends of $W_i$ are in $V(H')$. Therefore, one of the two subpaths of $W_i$ from $v$ to $V(H')$ will have length at most $d$. 
     
     Now suppose that~$\ell \ge 2$.
     Let $W_{i_1}, \dots, W_{i_\ell}$ with increasing~$i_j \in [k]$ be all the composite paths of $\mathcal{W}$ contained in $C$.
     Since $M(\mathcal{W})$ is a tree, the ends of $W_{i_\ell}$ are in distinct components $C_1$ and $C_2$ of $\bigcup_{j=1}^\ell W_{i_j} \setminus E(W_{i_\ell})$. For each $i \in [2]$, let $w_i$ be the end of $W_{i_\ell}$ in $C_i$ and let $\ell_i$ be the number of composite paths in $C_i$.  Note that $\ell_1+\ell_2 \le \ell-1$.  By induction, we may assume that $v \in V(W_{i_\ell})$.  Moreover, $\dist_{C_1}(w_1, V(H')) \le \ell_1d$ and \mbox{$\dist_{C_2}(w_2, V(H')) \le \ell_2d$}. Therefore, we are done unless $\dist_{W_{i_\ell}}(v, w_1) > (\ell-\ell_1)d$ and $\dist_{W_{i_\ell}}(v, w_2) > (\ell-\ell_2)d$.  But this implies $||W_{i_\ell}|| > (\ell-\ell_1)d + (\ell-\ell_2)d \ge (\ell+1)d \ge (\max\{\ell_1, \ell_2\}+2)d$, which contradicts that $\mathcal{W}$ is $d$-small.
 \end{proof}

 \begin{claim} \label{sixpaths}
     Let $\mathcal{W}$ be a $d$-small fruit tree with composite paths~$W_1, \dots, W_k$. Then each component of~$\bigcup_{i=1}^{k} W_i$ consists of at most $4$ composite paths.
 \end{claim}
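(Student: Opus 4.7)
The plan is to argue by contradiction together with strong induction on $k$, the number of composite paths of $\mathcal{W}$. The base case $k \le 4$ is immediate. For the inductive step with $k \ge 5$, I apply the inductive hypothesis to $\mathcal{W} \setminus W_k$ (which is itself a $d$-small fruit tree) to deduce that every component of $\bigcup_{j<k} W_j$ has at most $4$ composite paths. Since $W_k$ merges at most two such components $A, B$ (treating a previously untouched fruit as a component with $0$ composite paths), any component of $\bigcup_{j \le k} W_j$ newly exceeding the bound must equal $C^* = A \cup W_k \cup B$, with $\ell^* := |A|+|B|+1$ composite paths and $|A|,|B| \le 4$. I assume for contradiction that $\ell^* \ge 5$, so $\ell^* \in \{5, \ldots, 9\}$.

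First I would establish the auxiliary fact that any component $D$ of $\bigcup_i W_i$ with $\ell$ composite paths touches at least $\ell + 1$ distinct fruits $H_j'$. The proof is by induction on $\ell$: the first composite path of $D$ must have endpoints in two distinct fruits (else $M(\mathcal{W})$ contains a loop); for $\ell \ge 2$, the last composite path of $D$ either attaches a new fruit or merges two subcomponents $A', B'$ of $\bigcup_i W_i$ containing previous $D$-composite-paths. In the latter case, $A'$ and $B'$ must touch disjoint sets of fruits (otherwise they would already be connected in $M(\mathcal{W})$ through the shared fruit, and the merging composite path would create a cycle in $M(\mathcal{W})$), so by induction $D$ touches at least $(|A'|+1) + (|B'|+1) = \ell + 1$ distinct fruits. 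Applied to $A$ and $B$, this shows $C^*$ touches at least $\ell^* + 1 \ge 6$ distinct fruits.

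Next, by pigeonhole on indices in $[m+s]$, two of these fruits $H_p', H_q'$ satisfy $q - p \ge \ell^* \ge 5$, and \cref{dist3} yields $\dist_G(H_p', H_q') \ge 24d - 2$, improving to $\ge 48d - 2$ when $q - p \ge 7$. For the matching upper bound, since $M(\mathcal{W})$ is a tree and composite paths have pairwise disjoint edges, $\bigcup_i W_i$ is a forest, so $C^*$ is a tree. The unique path in $C^*$ between vertices of $H_p'$ and $H_q'$ factors through $A$, the connector $W_k$, and $B$. Using $d$-smallness together with the bound $\max\{n_{i_r}, m_{i_r}\} \le r - 1$ for the $r$-th composite path added within $A$ (yielding $|W_{i_r}| \le (r+1)d$), one obtains $\sum_{W \in A}|W| \le |A|(|A|+3)d/2$, similarly for $B$, and $|W_k| \le (\max\{|A|,|B|\} + 2)d \le 6d$.

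A case-by-case check over the at most nine possible pairs $(|A|,|B|)$ (up to symmetry) with $\ell^* \ge 5$ shows that this upper bound on tree-distance strictly beats \cref{dist3}'s lower bound, producing the desired contradiction for $d \ge 2$ (the case $d = 1$ having been handled by Menger's theorem at the outset). The hard part is the tightest case, $(|A|,|B|) = (1,4)$ and its symmetric partner, where the upper bound $22d$ must exceed $24d - 2$; this requires the strict inequality $d \ge 2$ and is where the case analysis demands the most careful bookkeeping, including occasional use of the stronger $48d - 2$ lower bound whenever the span of touched fruits exceeds $7$.
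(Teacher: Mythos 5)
Your proposal is correct and follows essentially the same route as the paper: a component with at least five composite paths splits across one connector path into two sides with at most four composite paths each, a component with $\ell$ composite paths meets $\ell+1$ distinct fruits (a fact the paper asserts from $M(\mathcal{W})$ being a tree and you prove by induction), so two of its fruits have index gap at least $5$ (resp.\ $7$), and the $d$-smallness length bounds ($22d < 24d-2$ for $d\ge 2$, resp.\ $34d < 48d-2$) contradict \cref{dist3} exactly as in the paper, with the only packaging difference being your induction on $k$ versus the paper's choice of a minimal prefix $\ell'$. One cosmetic repair: $\mathcal{W}\setminus W_k$ is not literally a $d$-small fruit tree (deleting $W_k$ disconnects the contracted minor into two components), so the induction hypothesis should be applied to the two fruit trees it decomposes into rather than to $\mathcal{W}\setminus W_k$ itself; also note the intended inequality in your tight case is that the upper bound $22d$ lies strictly below the lower bound $24d-2$.
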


 \begin{proof}
Suppose that some component of~$\bigcup_{i=1}^{k} W_i$ consists of composite paths $W_{i_1}, \dots, W_{i_\ell}$ with $\ell \ge 5$ and increasing~$i_j \in [k]$. Let $\ell' \in [\ell]$ be the smallest index such that $W_{i_1} \cup \dots \cup W_{i_{\ell'}}$ has a component $C$ with at least $5$ composite paths. 

Let $c$ be the number of composite paths of $C$.
Since $M(\mathcal{W})$ is a tree, $V(H') \cap V(C)$ contains $c+1$ vertices, no two of which are in the same $H_i'$. Thus, there exist $p,q \in [m+s]$ such that $q \geq p+c$ and $C$ contains vertices $u_p \in V(H_p')$ and $u_q \in V(H_q')$. 

Let $C_1$ and $C_2$ be the two components of~$\bigcup_{j=1}^{i_{\ell'}} W_j \setminus E(W_{i_{\ell'}})$ containing the ends of $W_{i_{\ell'}}$.
Let $i \in \{1,2\}$.
We denote by $c_i$ the number of composite paths of $C_i$.
The minimality of $\ell'$ implies that $c_i \leq 4$.  
Since $\mathcal{W}$ is $d$-small, the length of $W_{i_{\ell'}}$ is at most $(4+2)d$ and we can bound the number of edges in $C_i$ from above by $\sum_{i=1}^{c_i} (i+1)d$.
Thus, we have
\[ 
    \dist_C(u_p, u_q) \le \sum_{i=1}^{c_1} (i+1)d + (4+2)d +\sum_{i=1}^{c_2} (i+1)d
\]
by joining $u_p$ to $W_{i_{\ell'}}$ in $C_1$, walking along $W_{i_{\ell'}}$ and then joining $W_{i_{\ell'}}$ to $u_q$ in $C_2$.

If $c \ge 7$, then it follows from $c_1, c_2 \leq 4$ that
\[ 
\dist_C(u_p, u_q) \le \sum_{i=1}^{4} (i+1)d + (4+2)d +\sum_{i=1}^{4} (i+1)d =34d < d_2-2d_1+2.
\]
However, this contradicts~\cref{dist3}.

If $c \in \{5,6\}$, then it follows from $c_1 + c_2 + 1 = c \leq 6$ that
\[
\dist_C(u_p, u_q) \le \sum_{i=1}^{4} (i+1)d + (4+2)d +\sum_{i=1}^{1} (i+1)d =22d < d_2-2d_1+2-30d.
\]
Again, this is a contradiction to~\cref{dist3}.
\end{proof}

For each fruit tree $\mathcal{W} =\bigcup_{i\in I} H_i' \cup W_1 \cup \dots \cup W_k$, we define $\min(\mathcal{W}):= \min~I = \min\{i \in [m+s] : H_i' \subseteq \mathcal{W}\}$ and $\max(\mathcal{W}):= \max~I = \max\{i \in [m+s] :  H_i' \subseteq \mathcal{W}\}$.

An \defn{orchard} is a sequence $\mathcal{W}^1, \dots, \mathcal{W}^t$ of $d$-small fruit trees such that
 $\min(\mathcal{W}^1)=1$, $\max(\mathcal{W}^t)=m+s$, and $\max(\mathcal{W}^i)+1=\min(\mathcal{W}^{i+1})$ for all $i \in [t-1]$. Note that each $H_i'$ is itself a $d$-small fruit tree.  Thus, $H_1', \dots, H_{s+m}'$ is an orchard.

Among all orchards, we choose $\mathcal{W}^1, \dots , \mathcal{W}^t$ such that $t$ is minimal.

For each $i \in [t]$, let $w(i)=\max (\mathcal{W}^i)$.
For each $i \in [t-1]$ let $B_i$ be a shortest $\mathcal{W}^i-\beta_i$ path, where $\beta_i:=v_{b(H_{w(i)}')}$ and let $A_i$ be a shortest $\alpha_i-\mathcal{W}^{i+1}$ path, where $\alpha_i:=v_{a(H_{w(i)+1}')}$. For each $i \in [t-1]$, let $r_i$ be the end of $B_i$ in $\mathcal{W}^i$ and $\ell_{i+1}$ be the end of $A_i$ in $\mathcal{W}^{i+1}$.  Choose $\ell_1 \in X \cap V(H_1')$ and $r_t \in Y \cap V(H_{m+s}')$ arbitrarily, and set $\alpha_t := v_n$ and $\beta_0 := v_0$. For each $i \in [t]$, let $Q_i$ be an $\ell_i-r_i$ path in $\mathcal{W}^i$. See also \cref{fig:fruittree}.

\begin{figure}[ht]
    \centering
    \begin{tikzpicture}

        \draw[thick] (-5,0) -- (7,0);

        \node (P) at (-5,0) {};
        \node[left = 1pt of P]{$P$};

        \path[name path = d2, draw, blue, dashed] (-5,1.5) -- (7,1.5);
        \draw[blue, <->] (-4.5,1.5) -- (-4.5,0)
        node[midway,left]{\scriptsize $d_1$};

        \draw[orange] (-2.5,2.5) ellipse (1 and 0.75);
        \node[text=orange] at (-4.3, 2.5) {$H'_{w(i-1)+1}$};

        \node[circle, fill=orange, inner sep=0.5pt](dot) at (-1,2.5) {};
        \node[circle, fill=orange, inner sep=0.5pt, right= 0.2 of dot](dot2) {};
        \node[circle, fill=orange, inner sep=0.5pt, right= 0.2 of dot2] {};

        \draw[orange] (1,2.5) ellipse (1 and 0.75);

        \node[circle, fill=orange, inner sep=0.5pt](dot) at (2.5,2.5) {};
        \node[circle, fill=orange, inner sep=0.5pt, right= 0.2 of dot](dot2) {};
        \node[circle, fill=orange, inner sep=0.5pt, right= 0.2 of dot2] {};

        \draw[orange] (4.5,2.5) ellipse (1 and 0.75);
        \node[text=orange] at (6, 2.5) {$H'_{w(i)}$};

        \draw[purple,thick] (-3.75,3.25) -- (-3.75,3.5) -- (5.75,3.5) node[midway,above]{$\mathcal{W}_i$} -- (5.75,3.25);

        \node[circle, fill=gray, inner sep=1.5pt](li) at (-3.2,2.25) {};
        \node[gray, above = 0pt of li]{$\ell_i$};

        \node[circle, fill=gray, inner sep=1.5pt](ai) at (-4,0) {};
        \node[gray, below = 0pt of ai]{$\alpha_{i-1}$};

        \draw[gray] (li) -- (ai) node[midway,right]{$A_{i-1}$};

        \node[circle, fill=gray, inner sep=1.5pt](ri) at (5.2,2.25) {};
        \node[gray, above = 0pt of ri]{$r_i$};

        \node[circle, fill=gray, inner sep=1.5pt](bi) at (6,0) {};
        \node[gray, below = 0pt of bi]{$\beta_i$};

        \draw[gray] (ri) -- (bi) node[midway,right]{$B_{i}$};

        \node[](h1) at (-2,2.25) {};
        \node[](h2) at (0.5,2.25) {};
        \node[](help1) at (-1.7,1.5) {};
        \node[](help2) at (0.2,1.5) {};
        \node[](h3) at (4.3,2.25) {};

        \draw[green, line width=3pt, opacity=0.3] (li.center) -- (h1.center) to[out=270, in=120] (help1.center) to[out=300, in=180] (-0.75,1) to[out=0,in=240] (help2.center) to[out=330, in=180] (2,0.75) to[out=0, in=240] (h3.center) -- (ri.center);
        \node[text=green, opacity=1] at (4,1) {$Q_i$};

        \draw[magenta, line width=1pt] (h1.center) to[out=270, in=120] (help1.center) to[out=300, in=180] (-0.75,1) to[out=0,in=240] (help2.center) to[out=60,in=270] (h2.center);
        \draw[magenta, line width=1pt] (help2.center) to[out=330, in=180] (2,0.75) to[out=0, in=240] (h3.center);

    \end{tikzpicture}
    
    \caption{The fruit tree~$\mathcal{W}_i$ of the orchard with its corresponding components (orange) and its composite paths (magenta).}
    \label{fig:fruittree}
\end{figure}
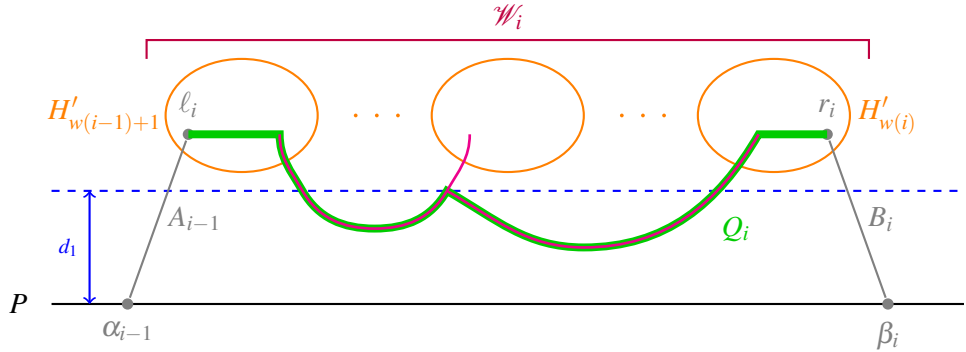

  We will now recursively define two $X-Y$ walks $P_1$ and $P_2$ such that $\dist_G(P_1, P_2) \ge d$. These walks then contain the desired paths. In order to show that our construction indeed yields $\dist_G(P_1, P_2) \ge d$, we will need to ensure that $\dist_P(P_1 \cap P, P_2 \cap P) \ge 10d$.

  Initialize $P_1:=P\alpha_1$ and $P_2:=\ell_1Q_1r_1$. Now suppose that we have already defined $P_1$ and $P_2$ such that, for some~$i \in [t]$,
  \begin{enumerate}[label=(\roman*)]
      \item\label{itm:1} $P_1$ starts in $v_0$ and ends in $\alpha_{i}$, and $P_2$ starts in $\ell_1$ and ends in $r_i$; or $P_1$ starts in $v_0$ and ends in $r_i$, and $P_2$ starts in $\ell_1 $ and ends in $\alpha_i$,
      \item\label{itm:2} $P_1 \cup P_2$ does not contain any vertices of $\bigcup_{j=w(i)+1}^{m+s} H_j'$,
      \item\label{itm:3}  among all components of $(P_1 \cup P_2) \cap P$, the component $P'$ containing $\alpha_i$ occurs last on $P$ (note that $\alpha_i$ could be the left or right end of this path),
      \item\label{itm:4} the other end of the component $P'$ is some $\beta_j$ with~$j < i$,
      \item\label{itm:5} either $\alpha_i$ occurs after $\beta_j$ along~$P$, or $\dist_P(\alpha_i, \beta_j) \leq 20d$ and if additionally $\dist_P(\alpha_i, \beta_j) > 10d$, then $\alpha_{i+1}$ occurs between $\alpha_i$ and $\beta_j$ along~$P$, and
      \item\label{itm:6} $\dist_P(P_1 \cap P, P_2 \cap P) \geq 10d$.
  \end{enumerate}
  We maintain these properties throughout the construction. 
  Note that property~\ref{itm:6} suffices in order to prove that the final $P_1$ and $P_2$ have distance at least $d$ in $G$. The other properties ensure that we can update the paths in the induction step without losing property \ref{itm:6}.

  Now we describe how to extend $P_1$ and $P_2$.  Suppose $P_1$ ends in $\alpha_i$, and $P_2$ ends in $r_i$ (the construction is analogous in the other case). Further, let $\beta_j$ be the other end of the component of $P_1 \cap P$ which contains $\alpha_i$. If $i = t$, then $P_1$ and $P_2$ are our desired $X - Y$ walks, since~$r_t, \alpha_t \in Y$.

    In order to ensure that $\dist_P(P_1 \cap P, P_2 \cap P) \ge 10d$ we need to distinguish several cases. In each case we update
    \[
    P_1:=P_1\alpha_iP\alpha_{k}A_{k}\ell_{k+1}Q_{k+1}r_{k+1} \text{ and } P_2:=P_2r_iB_i\beta_iP\alpha_{k+1}
    \]
    for some carefully chosen $k \in \{i, i+1, i+2, i+3\}$ where $\alpha_{k}$ occurs between $\alpha_i$ and $\beta_i$ along $P$ (see \cref{fig:extension}). 

    \begin{figure}[ht]
        \centering
        \begin{tikzpicture}
    
            \node[circle, fill=black, inner sep=1.5pt](l1) at (-5,2) {};
            \node[black, above = 0pt of l1]{$\ell_1$};
    
            \node[circle, fill=black, inner sep=1.5pt](ri) at (-2,2) {};
            \node[black, above = 0pt of ri]{$r_i$};
    
            \draw[black] (l1) -- (-4,2) node[near start, below] {$P_2$};
            \draw[black, dashed] (-4,2) -- (-3,2);
            \draw[black] (-3,2) -- (ri);
            
            \node[circle, fill=black, inner sep=1.5pt](b0) at (-5,0) {};
            \node[black, below = 0pt of b0]{$\beta_0$};
    
            \node[circle, fill=black, inner sep=1.5pt](ai) at (-2,0) {};
            \node[black, below = 0pt of ai]{$\alpha_i$};
    
            \draw[black] (b0) -- (-4, 0) node[near start, above] {$P_1$};
            \draw[black, dashed] (-4, 0) -- (-3, 0);
            \draw[black] (-3,0) -- (ai);

            \node[circle, fill=teal, inner sep=1.5pt](ak) at (-1,0) {};
            \node[teal, below = 0pt of ak]{$\alpha_k$};
    
            \node[circle, fill=teal, inner sep=1.5pt](lk+1) at (2,2) {};
            \node[teal, above = 0pt of lk+1]{$\ell_{k+1}$};
    
            \node[circle, fill=teal, inner sep=1.5pt](rk+1) at (5,2) {};
            \node[teal, above = 0pt of rk+1]{$r_{k+1}$};
    
            \draw[teal] (ai) -- (ak) -- (lk+1) node[near end, above] {$A_k$\;\,};
            \draw[teal] (lk+1) -- (2.75,2);
            \draw[teal] (2.75,2) arc (0:180:-0.75) node[midway, below] {$Q_{k+1}$};
            \draw[teal] (4.25,2) -- (rk+1);

            \draw[orange] (2.25,2.25) ellipse (0.8 and 0.6);
            \draw[orange] (4.75,2.25) ellipse (0.8 and 0.6);
            \node[circle, fill=orange, inner sep=0.5pt](dot) at (3.23,2.25) {};
            \node[circle, fill=orange, inner sep=0.5pt, right= 0.2 of dot](dot2) {};
            \node[circle, fill=orange, inner sep=0.5pt, right= 0.2 of dot2] {};

            \node[circle, fill=blue, inner sep=1.5pt](ak+1) at (1.5,0) {};
            \node[blue, below = 0pt of ak+1]{$\alpha_{k+1}$};
    
            \node[circle, fill=blue, inner sep=1.5pt](bi) at (3.5,0) {};
            \node[blue, below = 0pt of bi]{$\beta_i$};
    
            \node[circle, fill=blue, inner sep=1.5pt](ak+1') at (5.5,0) {};
            \node[blue, below = 0pt of ak+1']{$\alpha_{k+1}$};
    
            \draw[blue, dotted] (ak+1) -- (bi) -- (ak+1');
            \draw[blue] (ri)--(bi) node[near start, above] {$B_i$};

            \node[circle, fill=gray, inner sep=1.5pt](bj) at (-2.5,0) {};
            \node[gray, below = 0pt of bj]{$\beta_j$};
            \node[circle, fill=gray, inner sep=1.5pt](bj') at (-1.5,0) {};
            \node[gray, below = 0pt of bj']{$\beta_j$};
    
            \draw[gray,decorate,decoration={brace,amplitude=3pt}] ([yshift=3pt] bj.center) -- ([yshift=3pt,xshift=-1pt] ai.center) node[midway, above,yshift=3pt]{\scriptsize $P'$};
            \draw[gray,decorate,decoration={brace,amplitude=3pt}] ([yshift=3pt,xshift=1pt] ai.center) -- ([yshift=3pt] bj'.center) node[midway, above,yshift=3pt]{\scriptsize $P'$};
    
        \end{tikzpicture}
        
        \caption{The old paths $P_1$ and $P_2$ in black together with their updates in teal and blue, respectively. Note that $\alpha_{k+1}$ may occur before or after $\beta_i$ and that $\beta_j$ may occur before or after $\alpha_i$.}
        \label{fig:extension}
    \end{figure}
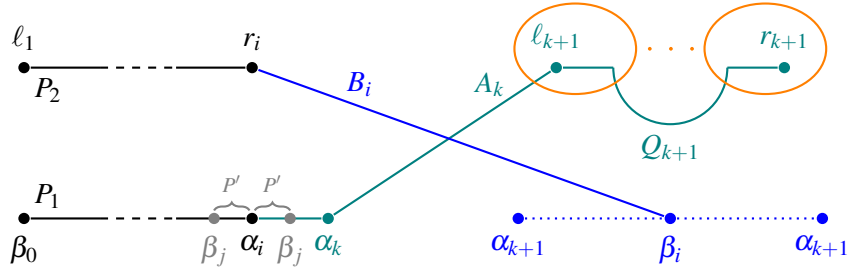
    
    Then properties \ref{itm:1} and \ref{itm:2} are clear from the definition of $P_1$ and $P_2$. By \ref{itm:3}, the component of $(P_1\alpha_i \cup P_2r_i) \cap P$ containing $\alpha_i$ occurs last on $P$. Since $\alpha_{k+1}$ occurs after $\alpha_i$ and $\alpha_k$ along $P$ and $\alpha_iP_1r_{k+1} \cap P = \alpha_iP\alpha_k$, this implies that \ref{itm:3} still holds for $P_1$ and $P_2$. To see that \ref{itm:4} holds, we first note that once we have shown \ref{itm:6}, it follows that the component $P'$ of $(P_1 \cup P_2) \cap P$ containing $\alpha_{k+1}$ is a subset of $P_2$. Since $\beta_i$ occurs after $\alpha_i$ along $P$ and $\alpha_i \in P_1$, property \ref{itm:3} of $P_1\alpha_i$ and $P_2r_i$ implies that $P'$ has ends $\beta_i$ and $\alpha_{k+1}$.
    We are thus left to check \ref{itm:5} and \ref{itm:6} in the individual cases. For this, we remark that since $\dist_P(\beta_i, \{\alpha_i, \beta_j\}) \geq 30d - 20d = 10d$ by assumption and \cref{dist13}, it suffices to ensure that $\dist_P(\beta_i, \alpha_{k}) \ge 10d$ if $k \neq i$ and that $\dist_P(\alpha_{k+1}, \{\alpha_{k}, \beta_j\}) \geq 10d$ in order to obtain \ref{itm:6}.

    To find the suitable $k \in \{ i,i+1,i+2,i+3\}$ for the update of $P_1$ and $P_2$, we consider where $\alpha_i, \alpha_{i+1}, \alpha_{i+2}, \alpha_{i+3}$ occur relative to $\beta_i$ on $P$.
    We depict the cases in \cref{fig:case1,fig:case2,fig:case3,fig:case4}.
    In each of the figures, we indicate the corresponding assumptions in orange and their implications in green.

  Suppose $\alpha_{i+1}$ occurs after $\beta_i$ along $P$. Then $\dist_P(\alpha_{i+1}, \{\alpha_i, \beta_j\}) \geq \dist_P(\beta_i, \{\alpha_i, \beta_j\}) \geq 10d$, and we set $k := i$ (see \cref{fig:case1}). By assumption, \ref{itm:5} and \ref{itm:6} are clear.

  \begin{figure}[ht]
    \centering
    \begin{tikzpicture}

        \def\distanceB[#1](#2:#3)(#4)(#5) 
            {
                \node[above=#4 of #2,xshift=0.3pt](2) {};
                \node[above=#4 of #3,xshift=-0.3pt](3) {};
                \draw[#1] (2.center) -- (3.center) node[midway,fill=white]{#5};
                \draw[#1] (2.center) -- ++(0,-0.15) -- ++(0,0.3);
                \draw[#1] (2.center) -- ++(0,0.15);
                \draw[#1] (3.center) -- ++(0,-0.15);
                \draw[#1] (3.center) -- ++(0,0.15);
            }

        \node[](left) at (-2, 0) {};
        \node[circle, fill=black, inner sep=1.5pt](ai) at (-1,0) {};
        \node[teal, below = 0pt of ai]{$\alpha_i$};
        \node[](topleft) at (-0.25, 1) {};

        \draw[teal] (left) -- (ai) node[midway,above,teal] {$P_1$} -- (topleft);

        \node[circle, fill=black, inner sep=1.5pt](ai+1) at (3, 0) {};
        \node[orange, below = 0pt of ai+1]{$\alpha_{i+1}$};
        \node[circle, fill=black, inner sep=1.5pt](bi) at (1,0) {};
        \node[blue, below = 0pt of bi]{$\beta_i$};
        \node[](topright) at (0.25, 1) {};

        \draw[blue] (topright) -- (bi) -- (ai+1) node[midway,above,blue] {$P_2$};

        \distanceB[green](ai:bi)(-1)($\ge 10d$)
    \end{tikzpicture}
    
    \caption{$\alpha_{i+1}$ occurs after $\beta_i$ along $P$.}\label{fig:case1}
\end{figure}

  Now suppose that $\alpha_{i+1}$ occurs before $\beta_i$ along $P$.
  Further, suppose $\alpha_{i+2}$ occurs after $\beta_i$ along $P$ (see \cref{fig:case2}). In this case, we proceed as follows. If $\dist_P(\alpha_{i+1}, \beta_i) \ge 10d$, then we set $k := i+1$ (see \cref{fig:case2a}). Again, \ref{itm:5} and \ref{itm:6} are clear from the assumption.
  We remark that by assumption this in particular is the case if $\dist_P(\alpha_i, \beta_j) > 10d$ and $\beta_j$ occurs after $\alpha_i$ along $P$, since then $\alpha_{i+1}$ occurs between $\alpha_i$ and $\beta_j$ along $P$ and, as shown above, $\dist_P(\beta_i,  \beta_j) \geq 10d$.

  \begin{figure}[htb]
    \centering
    \begin{subfigure}[t]{\textwidth}
        \centering
        \begin{tikzpicture}
    
            \def\distanceB[#1](#2:#3)(#4)(#5) 
                {
                    \node[above=#4 of #2,xshift=0.3pt](2) {};
                    \node[above=#4 of #3,xshift=-0.3pt](3) {};
                    \draw[#1] (2.center) -- (3.center) node[midway,fill=white]{#5};
                    \draw[#1] (2.center) -- ++(0,-0.15) -- ++(0,0.3);
                    \draw[#1] (2.center) -- ++(0,0.15);
                    \draw[#1] (3.center) -- ++(0,-0.15);
                    \draw[#1] (3.center) -- ++(0,0.15);
                }
    
            \node[](left) at (-3, 0) {};
            \node[circle, fill=black, inner sep=1.5pt](ai) at (-2,0) {};
            \node[teal, below = 0pt of ai]{$\alpha_i$};
            \node[circle, fill=black, inner sep=1.5pt](ai+1) at (0,0) {};
            \node[orange, below = 0pt of ai+1]{$\alpha_{i+1}$};
            \node[](topleft) at (0.75, 1) {};
    
            \node[circle, fill=black, inner sep=1.5pt](ai+2) at (4, 0) {};
            \node[orange, below = 0pt of ai+2]{$\alpha_{i+2}$};
            \node[circle, fill=black, inner sep=1.5pt](bi) at (2,0) {};
            \node[blue, below = 0pt of bi]{$\beta_i$};
            \node[](topright) at (1.25, 1) {};
            
            \draw[teal] (left) -- (ai) node[midway,above,teal] {$P_1$} -- (ai+1) -- (topleft);
            \draw[blue] (topright) -- (bi) -- (ai+2) node[midway,above,blue] {$P_2$};
    
            \distanceB[orange](ai+1:bi)(-1)($\ge 10d$)
        \end{tikzpicture}
        \caption{$\dist_P(\alpha_{i+1}, \beta_i) \ge 10d$}
        \label{fig:case2a}
    \end{subfigure}
    \vfill
    \begin{subfigure}[b]{\textwidth}
        \centering
        \begin{tikzpicture}
    
            \def\distanceB[#1](#2:#3)(#4)(#5) 
                {
                    \node[above=#4 of #2,xshift=0.3pt](2) {};
                    \node[above=#4 of #3,xshift=-0.3pt](3) {};
                    \draw[#1] (2.center) -- (3.center) node[midway,fill=white]{#5};
                    \draw[#1] (2.center) -- ++(0,-0.15) -- ++(0,0.3);
                    \draw[#1] (2.center) -- ++(0,0.15);
                    \draw[#1] (3.center) -- ++(0,-0.15);
                    \draw[#1] (3.center) -- ++(0,0.15);
                }
    
            \node[](left) at (-3, 0) {};
            \node[circle, fill=black, inner sep=1.5pt](ai) at (-2,0) {};
            \node[teal, below = 0pt of ai]{$\alpha_i$};
            \node[circle, fill=black, inner sep=1.5pt](ai+1) at (0,0) {};
            \node[orange, below = 0pt of ai+1]{$\alpha_{i+1}$};
            \node[](topleft) at (-1.25, 1) {};
    
            \node[circle, fill=black, inner sep=1.5pt](ai+2) at (4, 0) {};
            \node[orange, below = 0pt of ai+2]{$\alpha_{i+2}$};
            \node[circle, fill=black, inner sep=1.5pt](bi) at (2,0) {};
            \node[blue, below = 0pt of bi]{$\beta_i$};
            \node[](topright) at (1.25, 1) {};

            \draw[teal] (left) -- (ai) node[midway,above,teal] {$P_1$} -- (topleft);
            \draw[blue] (ai+1) -- (bi) -- (topright) node[midway,right,blue] {$P_2$};

            \distanceB[orange](ai+1:bi)(-1)($< 10d$)
            \distanceB[green](ai:ai+1)(-1)($\ge 10d$)
        \end{tikzpicture}
         \caption{$\dist_P(\alpha_{i+1}, \beta_i) < 10d$}
         \label{fig:case2b}
    \end{subfigure}
    
    \caption{$\alpha_{i+1}$ occurs before $\beta_i$ along $P$ and $\alpha_{i+2}$ occurs after $\beta_i$ along $P$.}
    \label{fig:case2}
\end{figure}

  Otherwise, $\dist_P(\alpha_{i+1}, \{\alpha_i, \beta_j\}) \ge 10d$. Indeed, either $\beta_j$ occurs before $\alpha_i$ along $P$ and thus we have $\dist_P(\alpha_{i+1}, \{\alpha_i, \beta_j\}) = \dist_P(\alpha_{i+1}, \alpha_i) = \dist_P(\alpha_i, \beta_i) - \dist_P(\alpha_{i+1}, \beta_i) \ge 30d - 10d \ge 10d$ by \cref{dist13}, or $\beta_j$ occurs after $\alpha_i$ along $P$ and thus we have  $\dist_P(\alpha_{i+1}, \{\alpha_i, \beta_j\}) = \dist_P(\alpha_{i+1}, \beta_j) = \dist_P(\alpha_i, \beta_i) - \dist_P(\alpha_{i+1}, \beta_i) - \dist_P(\alpha_i, \beta_j) \ge 30d - 10d -10d \ge 10d$.
  In this case we set $k := i$ (see \cref{fig:case2b}). Then \ref{itm:6} holds by the previous calculation. Moreover, recall that $\dist_P(\alpha_{k+1}, \beta_i) < 10d$ by assumption, and thus also \ref{itm:5} holds.

  We may thus assume that both $\alpha_{i+1}$ and $\alpha_{i+2}$ occur before $\beta_i$ along $P$. Suppose further that $\alpha_{i+3}$ occurs after $\beta_i$ along $P$ (see \cref{fig:case3}). In this case, we proceed as follows. If $\dist_P(\alpha_{i+2}, \beta_i) \ge 10d$, then we set $k := i+2$ (see \cref{fig:case3a}). Then \ref{itm:5} and \ref{itm:6} are clear from the assumption.
  
\begin{figure}[ht]
    \centering
    \begin{subfigure}[t]{\textwidth}
        \centering
        \begin{tikzpicture}
    
            \def\distanceB[#1](#2:#3)(#4)(#5) 
                {
                    \node[above=#4 of #2,xshift=0.3pt](2) {};
                    \node[above=#4 of #3,xshift=-0.3pt](3) {};
                    \draw[#1] (2.center) -- (3.center) node[midway,fill=white]{#5};
                    \draw[#1] (2.center) -- ++(0,-0.15) -- ++(0,0.3);
                    \draw[#1] (2.center) -- ++(0,0.15);
                    \draw[#1] (3.center) -- ++(0,-0.15);
                    \draw[#1] (3.center) -- ++(0,0.15);
                }
    
            \node[circle, fill=black, inner sep=1.5pt](ai) at (-2,0) {};
            \node[black, below = 0pt of ai]{$\alpha_i$};
            \node[circle, fill=black, inner sep=1.5pt](ai+1) at (0,0) {};
            \node[orange, below = 0pt of ai+1]{$\alpha_{i+1}$};
            \node[circle, fill=black, inner sep=1.5pt](ai+2) at (2,0) {};
            \node[orange, below = 0pt of ai+2]{$\alpha_{i+2}$};
            \node[circle, fill=black, inner sep=1.5pt](bi) at (4,0) {};
            \node[black, below = 0pt of bi]{$\beta_i$};
            \node[circle, fill=black, inner sep=1.5pt](ai+3) at (6, 0) {};
            \node[orange, below = 0pt of ai+3]{$\alpha_{i+3}$};
    
            \node[](left) at (-3, 0) {};
            \node[](right) at (7, 0) {};
    
            \node[](topleft) at (2.75, 1) {};
            \node[](topright) at (3.25, 1) {};
    
            \draw[teal] (ai) -- (ai+1) -- (ai+2) node[midway,above,teal] {$P_1$} -- (topleft);
            \draw[blue] (topright) -- (bi) -- (ai+3) node[midway,above,blue] {$P_2$};
    
            \distanceB[orange](ai+2:bi)(-1)($\ge 10d$)
        \end{tikzpicture}
        \caption{$\dist_P(\alpha_{i+2}, \beta_i) \ge 10d$}
        \label{fig:case3a}
    \end{subfigure}
    \vfill
    \begin{subfigure}[b]{\textwidth}
        \centering
        \begin{tikzpicture}
    
            \def\distanceB[#1](#2:#3)(#4)(#5) 
                {
                    \node[above=#4 of #2,xshift=0.3pt](2) {};
                    \node[above=#4 of #3,xshift=-0.3pt](3) {};
                    \draw[#1] (2.center) -- (3.center) node[midway,fill=white]{#5};
                    \draw[#1] (2.center) -- ++(0,-0.15) -- ++(0,0.3);
                    \draw[#1] (2.center) -- ++(0,0.15);
                    \draw[#1] (3.center) -- ++(0,-0.15);
                    \draw[#1] (3.center) -- ++(0,0.15);
                }
    
            \node[circle, fill=black, inner sep=1.5pt](ai) at (-2,0) {};
            \node[black, below = 0pt of ai]{$\alpha_i$};
            \node[circle, fill=black, inner sep=1.5pt](ai+1) at (0,0) {};
            \node[orange, below = 0pt of ai+1]{$\alpha_{i+1}$};
            \node[circle, fill=black, inner sep=1.5pt](ai+2) at (2,0) {};
            \node[orange, below = 0pt of ai+2]{$\alpha_{i+2}$};
            \node[circle, fill=black, inner sep=1.5pt](bi) at (4,0) {};
            \node[black, below = 0pt of bi]{$\beta_i$};
            \node[circle, fill=black, inner sep=1.5pt](ai+3) at (6, 0) {};
            \node[orange, below = 0pt of ai+3]{$\alpha_{i+3}$};
    
            \node[](left) at (-3, 0) {};
            \node[](right) at (7, 0) {};
    
            \node[](topleft) at (0.75, 1) {};
            \node[](topright) at (3.25, 1) {};
    
            \draw[teal] (ai) -- (ai+1) node[midway,above,teal] {$P_1$} -- (topleft);
            \draw[blue] (topright) -- (bi) node[midway,right,blue] {$P_2$} -- (ai+2);
    
            \distanceB[orange](ai+2:bi)(-1)($< 10d$)
            \distanceB[orange](ai+1:ai+2)(-1.5)($\ge 10d$)
            \distanceB[green](ai:ai+2)(-1)($\ge 10d$)
            \distanceB[gray](ai:ai+2)(-2)($\ge d_2$)
        \end{tikzpicture}
        \caption{$\dist_P(\alpha_{i+2}, \beta_i) < 10d$ and $\dist_P(\alpha_{i+1}, \alpha_{i+2}) \ge 10d$}
        \label{fig:case3b}
    \end{subfigure}
    \vfill
    \begin{subfigure}[b]{\textwidth}
        \centering
        \begin{tikzpicture}
    
            \def\distanceB[#1](#2:#3)(#4)(#5) 
                {
                    \node[above=#4 of #2,xshift=0.3pt](2) {};
                    \node[above=#4 of #3,xshift=-0.3pt](3) {};
                    \draw[#1] (2.center) -- (3.center) node[midway,fill=white]{#5};
                    \draw[#1] (2.center) -- ++(0,-0.15) -- ++(0,0.3);
                    \draw[#1] (2.center) -- ++(0,0.15);
                    \draw[#1] (3.center) -- ++(0,-0.15);
                    \draw[#1] (3.center) -- ++(0,0.15);
                }
    
            \node[circle, fill=black, inner sep=1.5pt](ai) at (-2,0) {};
            \node[black, below = 0pt of ai]{$\alpha_i$};
            \node[circle, fill=black, inner sep=1.5pt](ai+1) at (0,0) {};
            \node[orange, below = 0pt of ai+1]{$\alpha_{i+1}$};
            \node[circle, fill=black, inner sep=1.5pt](ai+2) at (2,0) {};
            \node[orange, below = 0pt of ai+2]{$\alpha_{i+2}$};
            \node[circle, fill=black, inner sep=1.5pt](bi) at (4,0) {};
            \node[black, below = 0pt of bi]{$\beta_i$};
            \node[circle, fill=black, inner sep=1.5pt](ai+3) at (6, 0) {};
            \node[orange, below = 0pt of ai+3]{$\alpha_{i+3}$};
    
            \node[](left) at (-3, 0) {};
            \node[](right) at (7, 0) {};
    
            \node[](topleft) at (-1.25, 1) {};
            \node[](topright) at (3.25, 1) {};
    
            \draw[teal] (ai) -- (topleft) node[midway,left,teal] {$P_1$};
            \draw[blue] (ai+1) -- (ai+2) -- (bi) -- (topright) node[midway,right,blue] {$P_2$};
    
            \distanceB[green](ai:ai+1)(-1)($\ge 10d$)
            \distanceB[orange](ai+1:ai+2)(-1)($< 10d$)
            \distanceB[orange](ai+2:bi)(-1)($< 10d$)
            \distanceB[gray](ai:ai+2)(-1.5)($\ge d_2$)
        \end{tikzpicture}
        \caption{$\dist_P(\alpha_{i+2}, \beta_i) < 10d$ and $\dist_P(\alpha_{i+1}, \alpha_{i+2}) < 10d$}
        \label{fig:case3c}
    \end{subfigure}
    \caption{$\alpha_{i+1}$ and $\alpha_{i+2}$ occur before $\beta_i$ along $P$ and $\alpha_{i+3}$ occurs after $\beta_i$ along $P$.}
    \label{fig:case3}
\end{figure}

  Otherwise, if $\dist_P(\alpha_{i+2}, \beta_i) < 10d$, we have $\dist_P(\alpha_{i+2}, \{\alpha_i, \beta_j\}) \ge \dist_P(\alpha_{i+2}, \alpha_i) - 20d \ge d_2 - 20d \ge 10d$ by assumption and Claim \ref{dist42}. If $\dist_P(\alpha_{i+1}, \alpha_{i+2}) \ge 10d$, then we set $k := i+1$ (see \cref{fig:case3b}). Then \ref{itm:6} holds by assumption. Since also $\dist_P(\alpha_{k+1}, \beta_i) < 10d$ by assumption, \ref{itm:5} holds too.

  Otherwise, $\dist_P(\alpha_{i+1}, \beta_i) = \dist_P(\alpha_{i+1}, \alpha_{i+2}) + \dist_P(\alpha_{i+2}, \beta_{i}) \le 20d$ and $\dist_P(\alpha_{i+1}, \{\alpha_i, \beta_j\}) \ge \dist_P(\alpha_i, \alpha_{i+2}) - \dist_P(\alpha_{i+1}, \alpha_{i+2}) - 20d \ge d_2 - 10d - 20d \ge 10d$ by assumption and Claim \ref{dist42}, and we then set $k := i$ (see \cref{fig:case3c}). Then \ref{itm:6} holds by the previous calculation. Moreover, \ref{itm:5} holds since $\alpha_{k+2} = v_{a(H'_{w(k+2)+1})}$ occurs between $\alpha_{k+1} = v_{a(H'_{w(k+1)+1})}$ and $\beta_i$ by assumption and because $a(H'_{w(k+1)+1}) < a(H'_{w(k+2)+1})$.

  Thus, we may assume that $\alpha_{i+1}$, $\alpha_{i+2}$ and $\alpha_{i+3}$ occur before $\beta_i$ along $P$. We claim that $\alpha_{i+4}$ occurs after~$\beta_i$ along $P$ and that $\dist_P(\alpha_{i+1}, \alpha_{i+2}) \geq d_2$.
  Indeed, the fact that $\alpha_{i+3} = v_{a(H'_{w(i+3)+1})}$  occurs before $\beta_i=v_{b(H_{w(i)}')}$ implies that no three of $H'_{w(i)}, \dots, H'_{w(i+3)+1}$ are in $\mathcal{M}$ since $\mathcal{M}$ is clean.
  Note that $w(i+3)+1 \geq w(i) + 4$, since the $w(i)$ are strictly increasing by definition.
  By construction of $(H'_i)_{i \in [m+s]}$, we have that if $H'_i \in \mathcal{S}$ then $H'_{i+1} \in \mathcal{M}$.
  Hence, $w(i+3)+1 = w(i) + 4$, and $H'_{w(i)}, H'_{w(i)+2}, H'_{w(i)+4} \in \mathcal{S}$, and $H'_{w(i)+1}, H'_{w(i)+3} \in \mathcal{M}$. In particular, $\mathcal{W}^{i+1} = H_{w(i)+1}', \mathcal{W}^{i+2} = H_{w(i)+2}'$ and $\mathcal{W}^{i+3} = H_{w(i)+3}'$ consist of single components of $H'$. It follows that $\dist_P(\alpha_{i+1}, \alpha_{i+2}) = \dist_P(v_{a(H'_{w(i)+2})}, v_{a(H'_{w(i)+3})}) \ge d_2$ by the choice of $\mathcal{S}$.
  Moreover, as $H'_{w(i)+4} \in \mathcal{S}$, we have $H'_{w(i)+5} \in \mathcal{M}$. Together with $H'_{w(i)+1}, H'_{w(i)+3} \in \mathcal{M}$, this implies that $a(H'_{w(i+4)+1}) \geq a(H'_{w(i)+5}) \geq b(H'_{w(i)+1}) \geq b(H'_{w(i)})$ as $\mathcal{M}$ is clean.
  Thus, $\alpha_{i+4} = v_{a(H_{w(i+4)+1})}$ occurs after $\beta_i = v_{b(H'_{w(i)})}$ along $P$.

  \begin{figure}[ht]
    \centering
    \begin{subfigure}[t]{\textwidth}
        \centering
        \begin{tikzpicture}
    
            \def\distanceB[#1](#2:#3)(#4)(#5) 
                {
                    \node[above=#4 of #2,xshift=0.3pt](2) {};
                    \node[above=#4 of #3,xshift=-0.3pt](3) {};
                    \draw[#1] (2.center) -- (3.center) node[midway,fill=white]{#5};
                    \draw[#1] (2.center) -- ++(0,-0.15) -- ++(0,0.3);
                    \draw[#1] (2.center) -- ++(0,0.15);
                    \draw[#1] (3.center) -- ++(0,-0.15);
                    \draw[#1] (3.center) -- ++(0,0.15);
                }
    
            \node[circle, fill=black, inner sep=1.5pt](ai) at (-2,0) {};
            \node[black, below = 0pt of ai]{$\alpha_i$};
            \node[circle, fill=black, inner sep=1.5pt](ai+1) at (0,0) {};
            \node[orange, below = 0pt of ai+1]{$\alpha_{i+1}$};
            \node[circle, fill=black, inner sep=1.5pt](ai+2) at (2,0) {};
            \node[orange, below = 0pt of ai+2]{$\alpha_{i+2}$};
            \node[circle, fill=black, inner sep=1.5pt](ai+3) at (4, 0) {};
            \node[orange, below = 0pt of ai+3]{$\alpha_{i+3}$};
            \node[circle, fill=black, inner sep=1.5pt](bi) at (6, 0) {};
            \node[black, below = 0pt of bi]{$\beta_i$};
            \node[circle, fill=black, inner sep=1.5pt](ai+4) at (8, 0) {};
            \node[green, below = 0pt of ai+4]{$\alpha_{i+4}$};
    
            \node[](left) at (-3, 0) {};
            \node[](right) at (7, 0) {};
    
            \node[](topleft) at (0.75, 1) {};
            \node[](topright) at (5.25, 1) {};
    
            \draw[teal] (ai) -- (ai+1) node[midway,above,teal] {$P_1$} -- (topleft);
            \draw[blue] (topright) -- (bi) node[midway,right,blue] {$P_2$} -- (ai+3) -- (ai+2);
    
            \distanceB[green](ai+1:ai+2)(-1)($\ge d_2$)
            \distanceB[orange](ai+2:bi)(-1)($\le 20d$)
        \end{tikzpicture}
        \caption{$\dist_P(\alpha_{i+2}, \beta_i) \le 20d$}
        \label{fig:case4a}
    \end{subfigure}
    \vfill
    \begin{subfigure}[b]{\textwidth}
        \centering
        \begin{tikzpicture}
    
            \def\distanceB[#1](#2:#3)(#4)(#5) 
                {
                    \node[above=#4 of #2,xshift=0.3pt](2) {};
                    \node[above=#4 of #3,xshift=-0.3pt](3) {};
                    \draw[#1] (2.center) -- (3.center) node[midway,fill=white]{#5};
                    \draw[#1] (2.center) -- ++(0,-0.15) -- ++(0,0.3);
                    \draw[#1] (2.center) -- ++(0,0.15);
                    \draw[#1] (3.center) -- ++(0,-0.15);
                    \draw[#1] (3.center) -- ++(0,0.15);
                }
    
            \node[circle, fill=black, inner sep=1.5pt](ai) at (-2,0) {};
            \node[black, below = 0pt of ai]{$\alpha_i$};
            \node[circle, fill=black, inner sep=1.5pt](ai+1) at (0,0) {};
            \node[orange, below = 0pt of ai+1]{$\alpha_{i+1}$};
            \node[circle, fill=black, inner sep=1.5pt](ai+2) at (2,0) {};
            \node[orange, below = 0pt of ai+2]{$\alpha_{i+2}$};
            \node[circle, fill=black, inner sep=1.5pt](ai+3) at (4, 0) {};
            \node[orange, below = 0pt of ai+3]{$\alpha_{i+3}$};
            \node[circle, fill=black, inner sep=1.5pt](bi) at (6, 0) {};
            \node[black, below = 0pt of bi]{$\beta_i$};
            \node[circle, fill=black, inner sep=1.5pt](ai+4) at (8, 0) {};
            \node[green, below = 0pt of ai+4]{$\alpha_{i+4}$};
    
            \node[](left) at (-3, 0) {};
            \node[](right) at (7, 0) {};
    
            \node[](topleft) at (4.75, 1) {};
            \node[](topright) at (5.25, 1) {};
    
            \draw[teal] (ai) -- (ai+1) node[midway,above,teal] {$P_1$} -- (ai+2) -- (ai+3) -- (topleft);
            \draw[blue] (topright) -- (bi) node[midway,right,blue] {$P_2$} -- (ai+4);
    
            \distanceB[orange](ai+3:bi)(-1)($\ge 10d$)
        \end{tikzpicture}
        \caption{$\dist_P(\alpha_{i+3}, \beta_i) \ge 10d$}
        \label{fig:case4b}
    \end{subfigure}
    \vfill
    \begin{subfigure}[b]{\textwidth}
        \centering
        \begin{tikzpicture}
    
            \def\distanceB[#1](#2:#3)(#4)(#5) 
                {
                    \node[above=#4 of #2,xshift=0.3pt](2) {};
                    \node[above=#4 of #3,xshift=-0.3pt](3) {};
                    \draw[#1] (2.center) -- (3.center) node[midway,fill=white]{#5};
                    \draw[#1] (2.center) -- ++(0,-0.15) -- ++(0,0.3);
                    \draw[#1] (2.center) -- ++(0,0.15);
                    \draw[#1] (3.center) -- ++(0,-0.15);
                    \draw[#1] (3.center) -- ++(0,0.15);
                }
    
            \node[circle, fill=black, inner sep=1.5pt](ai) at (-2,0) {};
            \node[black, below = 0pt of ai]{$\alpha_i$};
            \node[circle, fill=black, inner sep=1.5pt](ai+1) at (0,0) {};
            \node[orange, below = 0pt of ai+1]{$\alpha_{i+1}$};
            \node[circle, fill=black, inner sep=1.5pt](ai+2) at (2,0) {};
            \node[orange, below = 0pt of ai+2]{$\alpha_{i+2}$};
            \node[circle, fill=black, inner sep=1.5pt](ai+3) at (4, 0) {};
            \node[orange, below = 0pt of ai+3]{$\alpha_{i+3}$};
            \node[circle, fill=black, inner sep=1.5pt](bi) at (6, 0) {};
            \node[black, below = 0pt of bi]{$\beta_i$};
            \node[circle, fill=black, inner sep=1.5pt](ai+4) at (8, 0) {};
            \node[green, below = 0pt of ai+4]{$\alpha_{i+4}$};
    
            \node[](left) at (-3, 0) {};
            \node[](right) at (7, 0) {};
    
            \node[](topleft) at (2.75, 1) {};
            \node[](topright) at (5.25, 1) {};
    
            \draw[teal] (ai) -- (ai+1) node[midway,above,teal] {$P_1$} -- (ai+2) -- (topleft);
            \draw[blue] (topright) -- (bi) node[midway,right,blue] {$P_2$} -- (ai+3);
    
            \distanceB[orange](ai+2:bi)(-1)($> 20d$)
            \distanceB[green](ai+2:ai+3)(-1.5)($\ge 10d$)
            \distanceB[orange](ai+3:bi)(-1.5)($< 10d$)
        \end{tikzpicture}
        \caption{$\dist_P(\alpha_{i+2}, \beta_i) > 20d$ and $\dist_P(\alpha_{i+3}, \beta_i) < 10d$}
        \label{fig:case4c}
    \end{subfigure}
    \caption{$\alpha_{i+1}$, $\alpha_{i+2}$ and $\alpha_{i+3}$ occur before $\beta_i$ along $P$.}
    \label{fig:case4}
\end{figure}
  
  If $\dist_P(\alpha_{i+2}, \beta_i) \le 20d$, then we set $k := i+1$ (see \cref{fig:case4a}). Since $\dist_P(\alpha_{i+1}, \alpha_{i+2}) \geq d_2 \geq 10d$ as shown above, \ref{itm:6} holds. Moreover, $\alpha_{k+2} = v_{a(H'_{w(k+2)+1})}$ occurs between $\alpha_{k+1} = v_{a(H'_{w(k+1)+1})}$ and $\beta_i$ by assumption and because $a(H'_{w(k+1)+1}) < a(H'_{w(k+2)+1})$. Thus \ref{itm:5} holds.

  If $\dist_P(\alpha_{i+3}, \beta_i) \ge 10d$, then we set $k := i+3$ (see \cref{fig:case4b}). Then \ref{itm:5} and \ref{itm:6} are clear by assumption.

  Otherwise, $\dist_P(\alpha_{i+2}, \alpha_{i+3}) = \dist_P(\alpha_{i+2}, \beta_i) - \dist_P(\alpha_{i+3}, \beta_i) \ge 20d - 10d = 10d$. In this case we set $k := i+2$ (see \cref{fig:case4c}). Then \ref{itm:6} holds by the previous calculation. Moreover, \ref{itm:5} holds since $\dist_P(\alpha_{i+3}, \beta_i) < 10d$ by assumption.

  \begin{claim} \label{10dbound}
  $\dist_G(P_1 \cap P, P_2 \cap P) \geq 10d$.
  \end{claim}

  \begin{proof}
   By construction, we have $\dist_P(P_1 \cap P, P_2 \cap P) \geq 10d$.  Since $P$ is a shortest $X-Y$ path, the claim immediately follows.
  \end{proof}

\begin{claim}
    $\dist_G(P_1, P_2) \ge d$.
\end{claim}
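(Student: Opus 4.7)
The plan is to verify $\dist_G(u, v) \ge d$ for every $u \in V(P_1)$ and $v \in V(P_2)$ by a case analysis on the three types of building blocks each of $P_1, P_2$ is assembled from: subpaths of the original $X$--$Y$ path $P$, connector paths $A_j, B_j$, and interior paths $Q_j$ through fruit trees $\mathcal{W}^j$.

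First I would note that $P$ is a geodesic in $G$, i.e.\ $\dist_G(v_i, v_j) = |i - j|$ for any $v_i, v_j \in V(P)$, since otherwise $P$ could be shortcut into a strictly shorter $X$--$Y$ path. I would then compile a brief \emph{separation inventory}. First, combining $V(H') \cap B_G(P, d_1) = \emptyset$ with~\cref{closetoH'} and~\cref{sixpaths}, every vertex of any fruit tree $\mathcal{W}^k$ lies at $G$-distance at least $(d_1+1) - 4d = d+1$ from $V(P)$. Second, since $B_j$ is a shortest $\mathcal{W}^j$--$\beta_j$ path with $\dist_G(\beta_j, \mathcal{W}^j) \le d_1+1$, every vertex on $B_j$ (and symmetrically on $A_j$) lies within $G$-distance $d_1+1 = 5d+1$ of $\beta_j$ (resp.~$\alpha_j$). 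Third, the emphasised consequence of the construction gives $\dist_P(P_1 \cap P, P_2 \cap P) \ge 12d$, and fourth,~\cref{dist13} ensures $\dist_G(\beta_j, \alpha_{j+1}) \ge 24d$.

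Armed with this inventory, most sub-cases are routine. The $P$-to-$P$ case follows from the geodesic property and the $12d$ separation. The $P$-to-fruit-tree case is handled by the $d+1$ inequality. For the $P$-to-connector case, the $P$-endpoint of the connector (say $\beta_j$ or $\alpha_j$, lying on the opposite path's $P$-piece) is at $P$-distance $\ge 12d$ from $u \in P \cap P_{\text{other}}$; subtracting the connector length $\le 5d+1$ yields $\dist_G(u, v) \ge 12d - 5d - 1 \ge d$. The connector-to-connector and connector-to-fruit-tree cases reduce to these via the triangle inequality, combined with the bound $\dist_G(\beta_j, \alpha_{j+1}) \ge 24d$ to separate connectors attached to different pairs of $P$-anchors.

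The main obstacle is the fruit-tree-to-fruit-tree case with $u \in \mathcal{W}^i$ and $v \in \mathcal{W}^{i'}$, $i \ne i'$. Here $\dist_G(u,v) \ge \dist_G(\mathcal{W}^i, \mathcal{W}^{i'})$, and using~\cref{closetoH'} with~\cref{sixpaths} one locates $u$ within $4d$ of some $H_p' \subseteq \mathcal{W}^i$ and $v$ within $4d$ of some $H_q' \subseteq \mathcal{W}^{i'}$; if $|p - q| \ge 5$ then~\cref{dist3} directly suffices. For small index gaps I would exploit the \emph{minimality of $t$} in the orchard choice: if two consecutive orchards could be merged by attaching a single composite path $W$ of length at most $(\max(n, m)+2)d \le 6d$, where $n, m \le 4$ are supplied by~\cref{sixpaths} and the old $d$-small bounds are preserved because $W$ is appended last, the merger would yield a shorter orchard, contradicting minimality. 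Hence $\dist_G(\mathcal{W}^i, \mathcal{W}^{i+1}) > 2d \ge d$. The most delicate step is extending this minimality argument to the cases of the construction that skip an orchard (the one-$H_j'$ sub-cases): one must verify that the three relevant orchards together cannot be combined into a single $d$-small fruit tree either, so that non-consecutively used fruit trees remain at $G$-distance $> d$.
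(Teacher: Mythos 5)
Your overall architecture is the same as the paper's: a case analysis over the building blocks of $P_1$ and $P_2$, the geodesic property of $P$ together with the $12d$ separation of the anchors used on $P$, \cref{closetoH'} and \cref{sixpaths} for the $P$-to-fruit-tree case, \cref{dist3} for trees with large index gap, and minimality of $t$ for tree-to-tree proximity. The genuine gap is the connector-to-fruit-tree case, $p_1 \in V(Q_i)$ and $p_2 \in V(A_j)$ or $V(B_j)$, which you dismiss as reducing ``via the triangle inequality'' to the $P$-anchor cases. It does not: the dangerous sub-case is when $p_2$ lies near the end $r_j$ of $B_j$ inside $\mathcal{W}^j$, so that both $p_1$ and $p_2$ are far from $P$ and none of the lower bounds on $P$-distances between anchors says anything. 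Your coarse separation statement $\dist_G(\mathcal{W}^i,\mathcal{W}^{j}) > 2d$ (obtained by merging along any path of length at most $2d$) does not close it either: from $\dist_G(p_1,p_2)\le d-1$ and that bound one only gets $\dist_{B_j}(p_2,\beta_j)\le 4d-1$, hence $\dist_G(p_1,P)\le 5d-2$, which is perfectly compatible with the lower bound $\dist_G(p_1,P)\ge d_1+1-4d=d+1$; no contradiction follows. The paper closes exactly this case with a refined minimality-of-$t$ argument: since $B_j$ is a shortest $\mathcal{W}^j$--$\beta_j$ path of length at most $d_1+1=\dist_G(H',P)$, comparing it with the $H'$--$P$ route through $p_1$ and $p_2$ gives $\dist_{B_j}(p_2,r_j)\le \dist_G(H',p_1)+\dist_G(p_1,p_2)\le cd+d-1$, where $c\le 4$ is the number of composite paths of the component of $\mathcal{W}^i\setminus E(H')$ containing $p_1$; hence $\dist_G(p_1,r_j)\le (c+2)d$, which produces a $\mathcal{W}^i$--$\mathcal{W}^j$ path lying exactly within the $d$-smallness budget $(\max\{n,m\}+2)d$ and so contradicts the minimality of the orchard. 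Without tying the allowed merging length to the number $c$ of composite paths at $p_1$'s end, this case remains open, so the proof as written is incomplete.

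Two smaller points. Your closing worry about the construction steps that skip a fruit tree is misdirected: the minimality argument does not care which trees are consecutively used by $P_1$ and $P_2$; it rules out a short connection between any two distinct fruit trees of the orchard (and, with the refined budget above, also between a tree and the far end of a connector), so no separate analysis of ``three relevant fruit trees'' is needed. Also, the anchor bound you quote from \cref{dist13} should read $\dist_G(\beta_i,\alpha_i)\ge 24d$ in the paper's notation (since $\beta_i=v_{b(H_{w(i)}')}$ and $\alpha_i=v_{a(H_{w(i)+1}')}$), a harmless indexing slip.
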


\begin{proof}
    Suppose for a contradiction that $p_1 \in V(P_1)$, $p_2 \in V(P_2)$, and $\dist_G(p_1, p_2) \le d-1$.
    If $p_1 \in V(Q_i)$ and $p_2 \in V(Q_j)$ for some $i \neq j$, then $\dist_G(p_1, p_2) \ge d$ by the minimality of $t$.  Suppose $p_1 \in V(P)$ and $p_2 \in V(Q_i)$ for some $i$.  By~\cref{closetoH'} and~\cref{sixpaths}, $\dist_G(p_2, H') \le 4d$. Since \mbox{$\dist_G(P, H') \ge d_1$}, we have $\dist_G(p_1, p_2) \ge d_1-4d \ge d$. Suppose $p_1 \in V(P)$ and $p_2 \in V(A_i)$ for some $i$.  By~\cref{10dbound}, $\dist_G(p_1, \alpha_i) \ge 10d$. This contradicts $\dist_G(p_1, \alpha_i) \le \dist_G(p_1, p_2)+\dist_G(p_2, \alpha_i) \le d-1+d_1 < 10d$.  The same argument handles the case $p_1 \in V(P)$ and $p_2 \in V(B_i)$ for some $i$.

    Suppose $p_1 \in V(A_i)$ and $p_2 \in V(A_j)$ for some $i<j$. If $\dist_G(p_1, \ell_i) + \dist_G(p_2, \ell_j) \le d+1$, then
    \[
    \dist_G(\ell_i, \ell_j) \le \dist_G(\ell_i, p_1) + \dist_G(p_1, p_2) + \dist_G(p_2, \ell_j) \le (d+1) + (d-1) = 2d,
    \]
    which contradicts the minimality of $t$. Otherwise,
    \begin{align*}
     \dist_G(\alpha_i, \alpha_j) &\le \dist_G(\alpha_i, p_1)+\dist_G(p_1, p_2) +\dist_G(p_2, \alpha_j)\\
     & = (\dist_G(\alpha_i, \ell_i) - \dist_G(\ell_i, p_1)) + \dist_G(p_1, p_2) + (\dist_G(\alpha_j, \ell_j) - \dist_G(\ell_j, p_2))\\
     & \le 2d_1 + 2 -(d+2) + (d-1) < 10d,
    \end{align*}
    which contradicts $\dist_G(\alpha_i, \alpha_j) \ge 10d$ by~\cref{10dbound}.
    The same argument handles the case $p_1 \in V(B_i)$ and $p_2 \in V(B_j),$ and $p_1 \in V(A_i)$ and $p_2 \in V(B_j)$, for some $i \neq j$.

    Suppose $p_1 \in V(Q_i)$ and $p_2 \in V(A_j)$ for some $i,j$. Since $\dist_G(H', P)=d_1+1$ and~$||A_j|| \le d_1 + 1$,
    \[
    \dist_G(P,p_2) + \dist_G(p_2,\ell_j) = ||A_j|| \leq \dist_G(H',P) \leq \dist_G(H',p_1) + \dist_G(p_1,p_2) + \dist_G(p_2,P).
    \]
    Moreover, assume that the component of $\mathcal{W}^i \setminus E(H')$ which contains~$p_1$ consists of exactly $c$ composite paths.
    By~\cref{closetoH'}, $\dist_{\mathcal{W}^i}(p_1, V(H')) \le cd$.
    Altogether, we have $\dist_{G}(p_2, \ell_j) \le \dist_G(H', p_1)+\dist_G(p_1, p_2) \le cd+d-1$.
    Therefore,
    \[
    \dist_{G}(p_1, \ell_j) \le \dist_G(p_1, p_2)+\dist_{G}(p_2, \ell_j) \le cd +2d-2 \le (c+2)d.
    \]
    However, this contradicts the minimality of $t$, since there is $\mathcal{W}^i-\mathcal{W}^j$ path of length at most $(c+2)d$.

     The same argument also handles the case $p_1 \in V(Q_i)$ and $p_2 \in V(B_j)$ for some $i,j$.  By symmetry between $p_1$ and $p_2$, there are no more remaining cases.
\end{proof}
This concludes the proof.
\end{proof}

\section*{Acknowledgements}

We are grateful to Rose McCarty and Paul Seymour for allowing us to include the proof of~\cref{d=3reduction} in this paper.  We  also thank Agelos Georgakopoulos and Panagiotis Papazoglou for informing us of the existence of their preprint~\cite{GP23}. We also acknowledge 
Adrian Thananopavarn for correcting the lowerbound in~\cref{lem:lowerbound} from an earlier version of this paper. Finally, we thank the two anonymous referees for their valuable input which greatly improved the readability of the paper.  

The third author gratefully acknowledges support by doctoral scholarships of the Studienstiftung des deutschen Volkes and the Cusanuswerk -- Bisch\"{o}fliche Studienf\"{o}rderung.
The fourth author gratefully acknowledges support by a doctoral scholarship of the Studienstiftung des deutschen Volkes.

\bibliographystyle{abbrv}
\bibliography{references}
\end{document}